\def\ind{\mathbbm{1}}
\def\Ex{\mathbb E}
\def\Pr{\mathbb P}
\def\er{\mathbb R}
\def\Med{\mathrm{Med}}
\def\Var{\mathrm{Var}}
\def\Cov{\mathrm{Cov}}
\newtheorem{thm}{Theorem}
\newtheorem{lem}[thm]{Lemma}
\newtheorem{prop}[thm]{Proposition}
\title{Order statistics and concentration of $l_r$ norms for log-concave vectors
\thanks{AMS 2010 Mathematics Subject Classification: 60E15 (52A38, 60B11).}
\thanks{
Key words and phrases: log-concave measures, order statistics, concentration of
volume.}
\thanks{
Research partially supported by MNiSW Grant no. N N201 397437 and the Foundation for Polish 
Science.}}
\author{Rafa{\l} Lata{\l}a \thanks{Institute of Mathematics, University of Warsaw, 
Banacha 2, 02-097 Warszawa, Poland 
and Institute of Mathematics, Polish Academy of Sciences, 
ul. \'{S}niadeckich 8, 00-956 Warszawa, Poland, e-mail: rlatala@mimuw.edu.pl.}}
\date{}
\begin{document}

\maketitle

\begin{abstract}
We establish upper bounds for tails of order statistics of isotropic log-concave
vectors and apply them to derive a concentration of $l_r$ norms of such vectors.
\end{abstract}

\section{Introduction and notation}

An $n$ dimensional random vector is called log-concave if it has a log-concave distribution, i.e.\
for any compact nonempty sets $A,B\subset \er^n$ and $\lambda\in (0,1)$,
\[
\Pr(X\in \lambda A+(1-\lambda)B)\geq \Pr(X\in A)^{\lambda}\Pr(X\in B)^{1-\lambda},
\]
where $\lambda A+(1-\lambda)B=\{\lambda x+(1-\lambda)y\colon x\in A,y\in B\}$. By the result of 
Borell \cite{Bo} a vector $X$ with full dimensional support is log-concave  if
and only if it has a density of the form $e^{-f}$, where 
$f\colon \er^n\rightarrow (-\infty,\infty]$
is a convex function. Log-concave vectors are frequently
studied in convex geometry, since by the Brunn-Minkowski inequality  uniform distributions
on convex sets as well as their lower dimensional marginals are log-concave. 

A random vector $X=(X_1,\ldots,X_n)$  is isotropic if $\Ex X_i=0$ and
$\Cov(X_i,X_j)=\delta_{i,j}$ for all $i,j\leq n$. Equivalently, an $n$-dimensional random
vector with mean zero is isotropic if $\Ex\langle t,X\rangle^2 =|t|^2$ for any $t\in \er^n$.
For any nondegenerate log-concave vector $X$ there exists an affine transformation $T$ such
that $TX$ is isotropic.

In recent years there were derived numerous important properties of log-concave vectors.
One of such results is the Paouris concentration of mass \cite{Pa} that states that for any
isotropic log-concave vector $X$ in $\er^n$,
\begin{equation}
\label{conc_mass}
\Pr(|X|\geq Ct\sqrt{n})\leq \exp(-t\sqrt{n})\quad \mbox{ for } t\geq 1.
\end{equation}

One of purposes of this paper is the extension of the Paouris result to $l_r$ norms, that
is deriving upper bounds for $\Pr(\|X\|_r\geq t)$, where $\|x\|_r=(\sum_{i=1}^n|x_i|^r)^{1/r}$.
For $r\in [1,2)$ this is an easy consequence of \eqref{conc_mass} and H\"older's inequality,
however the case $r>2$ requires in our opinion new ideas. We show that
\[
\Pr\big(\|X\|_r\geq C(r)tn^{1/r}\big)\leq \exp\big(-tn^{1/r}\big)\quad \mbox{ for } t\geq 1,\ r>2,
\]
where $C(r)$ is a constant depending only on $r$ -- see Theorem \ref{thm_larger}.
Our method is based on suitable tail estimates for order statistics of $X$.
  
For an $n$--dimensional random vector $X$ by $X_1^*\geq X_2^*\geq \ldots\geq X_n^*$ we denote
the nonincreasing rearrangement of $|X_1|,\ldots,|X_n|$ 
(in particular $X_1^*=\max\{|X_1|,\ldots,|X_n|\}$ and
$X_n^*=\min\{|X_1|,\ldots,|X_n|\}$). Random variables $X_k^*$, $1\leq k\leq n$, are called 
order statistics of $X$. 

By \eqref{conc_mass} we immediately get for isotropic, log-concave vectors $X$
\[
\Pr(X_k^*\geq t)\leq \exp\Big(-\frac{1}{C}\sqrt{k}t\Big)
\]
for $t\geq C\sqrt{n/k}$. The main result of the paper is Theorem \ref{estorder} which
states that the above inequality holds for $t\geq C\log(en/k)$ -- as shows the example
of exponential distribution this range of $t$ is for $k\leq n/2$ optimal up to a universal constant.

Tail estimates for order statistics can be also applied to provide optimal
estimates for $\sup_{\#I=m}|P_I X|$, where supremum is taken over all subsets of $\{1,\ldots,n\}$
of cardinality $m\in[1,n]$ and $P_I$ denotes the coordinatewise projection.  
The details will be presented in the forthcoming 
paper \cite{ALLPT}.

The organization of the article is as follows.
In Section 2 we discuss upper bounds for tails of order statistics and their connections
with exponential concentration and Paouris' result. Section 3 is devoted to
the derivation of tail estimates of $l_r$ norms for log-concave vectors.
Finally Section 4 contains a proof of Theorem \ref{estN}, which is a crucial tool used
to derive our main result.

Throughout the article by $C,C_1,\ldots$ we denote 
universal constants. Values of a constant $C$ may differ
at each occurence. For $x\in \er^n$ we put $|x|=\|x\|_2=(\sum_{i=1}^n x_i^2)^{1/2}$.

\section{Tail estimates for order statistics}

If coordinates of $X$ are independent symmetric exponential random variables with variance one
then it is not hard to see that
$\Med(X_k^*)\geq \frac{1}{C}\log(en/k)$ for any $1\leq k\leq n/2$. So we may obtain a reasonable
bound for $\Pr(X_k^*\geq t)$, $k\leq n/2$ in the case of log-concave vectors only for 
$t\geq \frac{1}{C}\log(en/k)$.  Using the idea that exponential random vectors 
are extremal in the class of unconditional log-concave vectors 
(i.e. such vectors that  $(\eta_1X_1,\ldots,\eta_nX_n)$ has the same distribution as $X$ 
for any choice of signs $\eta_i\in \{-1,1\}$) one may easily 
derive the following fact.

\begin{prop}
\label{uncond}
If $X$ is log-concave and unconditional $n$-dimensional isotropic random vector then
\[
\Pr(X_k^*\geq t)\leq \exp\Big(-\frac{1}{C}kt\Big)\quad \mbox{ for }t\geq C\log\Big(\frac{en}{k}\Big).
\]
\end{prop}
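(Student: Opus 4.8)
The plan is to reduce the claim, by a union bound over $k$-element subsets of $\{1,\dots,n\}$, to a large deviation estimate for a single $\ell_1$-type functional, which is then handled by the extremality of product exponential laws in the unconditional log-concave class. Since $\{X_k^*\ge t\}$ is the event that $|X_i|\ge t$ for at least $k$ indices $i$, on this event $\sum_{i\in I}|X_i|\ge kt$ for some $I$ with $|I|=k$, so
\[
\Pr(X_k^*\ge t)\le\sum_{|I|=k}\Pr\Big(\sum_{i\in I}|X_i|\ge kt\Big)\le\binom nk\max_{|I|=k}\Pr\Big(\sum_{i\in I}|X_i|\ge kt\Big).
\]
Thus it suffices to prove that $\Pr(\sum_{i\in I}|X_i|\ge kt)\le(C_1e^{-t/C_1})^{|I|}$ for a universal $C_1$; granting this and using $\binom nk\le(en/k)^k$ together with the elementary fact that, for suitable universal constants, the resulting product is $\le e^{-kt/C}$ as soon as $t\ge C\log(en/k)$ (recall $\log(en/k)\ge1$, so the threshold also exceeds $C_1$), we obtain the Proposition.

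To bound the tail of $S_I:=\sum_{i\in I}|X_i|$ I would use Chernoff's inequality, so the task becomes bounding $\Ex\exp(\theta S_I)$ at a fixed small universal $\theta$. Here unconditionality enters: since $e^{\theta|x|}\le2\cosh(\theta x)$ and, by the sign-symmetry of the law of $X$, $\Ex\prod_{i\in I}\cosh(\theta X_i)=\Ex\exp(\theta\langle\mathbf 1_I,X\rangle)$ (replace each $X_i$, $i\in I$, by $\varepsilon_iX_i$, average over signs, and swap expectations), one gets $\Ex\exp(\theta S_I)\le 2^{|I|}\,\Ex\exp(\theta\langle\mathbf 1_I,X\rangle)$. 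At this point I invoke the extremality of exponential vectors (in the spirit of Bobkov--Nazarov): for an isotropic unconditional log-concave $X$ the moments of a linear functional $\langle a,X\rangle$ are dominated, up to a universal constant, by those of $\sum_i a_iE_i$, where the $E_i$ are independent symmetric exponential random variables of variance one; in particular $\Ex\exp(\theta\langle\mathbf 1_I,X\rangle)\le\Ex\exp\big(C\theta\sum_{i\in I}E_i\big)=(1-C^2\theta^2/2)^{-|I|}$, which is at most $2^{|I|}$ once $C\theta\le1$. Choosing $\theta=1/C$ gives $\Ex\exp(\theta S_I)\le4^{|I|}$, and Chernoff's inequality then yields $\Pr(S_I\ge kt)\le4^{|I|}e^{-\theta kt}=(C_1e^{-t/C_1})^{|I|}$, as required.

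The substantive point — and the only real obstacle — is the extremality step; everything else is bookkeeping. It is genuinely needed that $S_I$ has \emph{large deviation} behaviour comparable to that of a sum of $|I|$ independent exponentials, i.e.\ that its exponential moment is finite at a \emph{fixed} $\theta$ with a bound of the form $C^{|I|}$. Using only that each $|X_i|$ is a one-dimensional log-concave variable of bounded $\psi_1$-norm one gets, by the triangle inequality for $\|\cdot\|_{\psi_1}$, that $\|S_I\|_{\psi_1}\lesssim|I|$, hence merely $\Pr(S_I\ge kt)\lesssim e^{-t/C}$ with no gain in the exponent; after the union bound this reproduces only the Paouris-type estimate, with exponent $\sqrt k\,t$ and range $t\gtrsim\sqrt{n/k}$. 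The upgrade to exponent $kt$ and to the range $t\gtrsim\log(en/k)$ is precisely what the sign-symmetry of $X$ buys, through the comparison with the product exponential law. (If one wants a self-contained proof of the extremality inequality, it can be approached by induction on $|I|$: conditioning on all but one coordinate indexed by $I$, unconditionality makes the conditional law of the remaining coordinate a symmetric log-concave density on the line, and the inductive step is closed by controlling its conditional tail; propagating the constants through the $|I|$ steps is the technical heart.)
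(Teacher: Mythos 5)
Your proposal is correct and takes essentially the same route as the paper: a union bound over the $\binom{n}{k}$ subsets of size $k$ combined with the Bobkov--Nazarov extremality of the product exponential law for unconditional isotropic log-concave vectors. The only difference is cosmetic: the paper applies the Bobkov--Nazarov joint tail estimate $\Pr(X_{i_1}\geq t,\ldots,X_{i_k}\geq t)\leq \exp(-kt/C)$ directly to each subset, whereas you detour through a Chernoff bound for $\sum_{i\in I}|X_i|$ via symmetrization and a linear-functional comparison with $\sum_i a_i E_i$ --- the same extremality ingredient, invoked slightly less directly (and indeed your needed exponential-moment bound also follows from the joint tail estimate the paper cites).
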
 

\begin{proof}
The result of Bobkov and Nazarov \cite{BN} implies that for any $i_1<i_2<\ldots<i_k$ and $t>0$
\[
\Pr(|X_{i_1}|\geq t,\ldots,|X_{i_k}|\geq t)=2^k \Pr(X_{i_1}\geq t,\ldots,X_{i_k}\geq t)\leq
2^k\exp\Big(-\frac{1}{C}kt\Big)
\]
Hence 
\begin{align*}
\Pr(X_k^*\geq t)&\leq \sum_{1\leq i_1<\ldots<i_k\leq n}\Pr(|X_{i_1}|\geq t,\ldots,|X_{i_k}|\geq t)\leq
\binom{n}{k}2^k\exp\Big(-\frac{1}{C}kt\Big)
\\
&\leq \Big(\frac{2en}{k}\Big)^k\exp\Big(-\frac{1}{C}kt\Big)\leq \exp\Big(-\frac{1}{2C}kt\Big)
\end{align*}
if $t\geq C'\log(en/k)$.
\end{proof}

However for a general isotropic log-concave vector without unconditionality assumption 
we may bound $\Pr(X_{i_1}\geq t,\ldots,X_{i_k}\geq t)$ only by $\exp(-\sqrt{k}t/C)$ for $t\geq C$. 
This suggests that we should rather expect bound  $\exp(-\sqrt{k}t/C)$ than 
$\exp(-kt/C)$. If we try union bound
as in the proof of Proposition \ref{uncond} it will work only for $t\geq C\sqrt{k}\log(en/k)$.

Another approach may be based on the exponential concentration. We say that a vector $X$ satisfies
exponential concentration inequality with a constant $\alpha$ if for any Borel set $A$,
\[
\Pr(X\in A+\alpha tB_2^n)\geq 1-\exp(-t)\quad \mbox{ if }\
 \Pr(X\in A)\geq \frac{1}{2} \mbox{ and }t>0. 
\]

\begin{prop}
If coordinates of $n$-dimensional vector $X$ have mean zero and variance
one and $X$ satisfies exponential concentration inequality with a constant $\alpha\geq 1$
then
\[
\Pr(X_k^*\geq t)\leq \exp\Big(-\frac{1}{3\alpha}\sqrt{k}t\Big)
\quad \mbox{ for } t\geq 8\alpha\log\Big(\frac{en}{k}\Big).
\]
\end{prop}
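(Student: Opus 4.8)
The plan is to combine the exponential concentration inequality with a first‑moment estimate for $X_k^*$, in the spirit of the standard "median $\Rightarrow$ tail" argument. First I would show that the $k$‑th order statistic cannot be large in expectation or median: since each coordinate has variance one, Chebyshev gives $\Pr(|X_i|\geq s)\leq 1/s^2$, so for a fixed level $s$ the expected number of indices with $|X_i|\geq s$ is at most $n/s^2$. Taking $s$ a suitable multiple of $\sqrt{n/k}$ makes this expectation at most, say, $k/4$, and then Markov's inequality yields $\Pr(X_k^*\geq s)=\Pr(\#\{i\colon |X_i|\geq s\}\geq k)\leq 1/4<1/2$. In other words, with $A=\{x\colon x_k^*< C_0\sqrt{n/k}\}$ for an appropriate universal $C_0$, we have $\Pr(X\in A)\geq 1/2$, which is exactly the hypothesis needed to feed $A$ into the concentration inequality.

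The second step is to enlarge $A$ by a Euclidean ball and read off what it says about order statistics. If $x\in A+\alpha t B_2^n$, write $x=a+v$ with $a\in A$ and $|v|\leq \alpha t$. Then at most $k-1$ coordinates of $a$ exceed $C_0\sqrt{n/k}$ in absolute value, so among any $k$ indices realizing $x_k^*$ at least one, say $i_0$, has $|a_{i_0}|< C_0\sqrt{n/k}$; hence $x_k^*\leq |x_{i_0}|\leq |a_{i_0}|+|v_{i_0}|< C_0\sqrt{n/k}+\alpha t$. Applying the exponential concentration inequality with this $A$ and with the parameter chosen as $t/\alpha$ (i.e.\ radius $t$), we get
\[
\Pr\Big(X_k^*\geq C_0\sqrt{n/k}+t\Big)\leq \exp\Big(-\frac{t}{\alpha}\Big).
\]
The last step is bookkeeping: we want the bound in the stated form $\exp(-\sqrt{k}\,t/(3\alpha))$ valid for $t\geq 8\alpha\log(en/k)$. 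In that regime one checks that $C_0\sqrt{n/k}\leq t/2$ — this is where the threshold $t\geq 8\alpha\log(en/k)$ together with $\alpha\geq1$ and the elementary inequality $\sqrt{n/k}\leq (en/k)\leq \exp(\text{const}\cdot\log(en/k))$ is used, so that $\sqrt{n/k}$ is dominated by a small multiple of $\log(en/k)$ up to adjusting constants — hence $C_0\sqrt{n/k}+t\leq 2t$, and then $\Pr(X_k^*\geq 2t)\leq \exp(-t/\alpha)$, which after relabeling $2t\rightsquigarrow t$ and inserting the factor $\sqrt{k}\geq 1$ gives the claim with room to spare in the constant $3$.

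The main obstacle I anticipate is not conceptual but in calibrating the constants so that the "absorption" step $C_0\sqrt{n/k}\le t/2$ genuinely follows from $t\ge 8\alpha\log(en/k)$ — one must be a little careful because $\sqrt{n/k}$ is polynomial in $n/k$ while $\log(en/k)$ is logarithmic, so the inequality $\sqrt{n/k}\le c\log(en/k)$ is false for large $n/k$. The resolution is that we never need that: we need $\sqrt{n/k}$ to be dominated by $t$ itself, and $t$ is allowed to be as large as we like; the threshold only has to guarantee that the \emph{smallest} admissible $t$, namely $8\alpha\log(en/k)$, already exceeds $2C_0\sqrt{n/k}$, which again fails for large $n/k$. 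So in fact the cleaner route is to keep the $\sqrt{k}$ on the left: redo the first step with $s=C_0\sqrt{n/k}$ and note that the relevant event for the \emph{union bound over $k$-subsets} is avoided, OR — better — observe that once $t\geq C_0\sqrt{n/k}$ we may simply absorb, and separately handle $8\alpha\log(en/k)\le t< C_0\sqrt{n/k}$ by the trivial bound using that $\sqrt{k}\,t/(3\alpha)\le \sqrt{k}\cdot C_0\sqrt{n/k}/(3\alpha)=C_0\sqrt{n}/(3\alpha)$ is comparable to the exponent already obtained from $\sqrt k \log(en/k)$‑type estimates; I would organize the final inequality so that the exponent $\sqrt{k}\,t/(3\alpha)$ is checked against $t/\alpha - \log(\text{number of }k\text{-subsets})$ in the union‑bound version, which is the genuinely robust computation and where the constants $3$ and $8$ come out.
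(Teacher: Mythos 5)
There are two genuine gaps, and together they mean the argument as written does not prove the statement. The first and most serious: your concentration step cannot produce the factor $\sqrt{k}$ in the exponent. You enlarge $A$ by a ball of radius $t$ and control the single coordinate $|v_{i_0}|$ by $|v|\le t$, which at best yields $\Pr\big(X_k^*\ge C_0\sqrt{n/k}+t\big)\le \exp(-t/\alpha)$. From this you cannot deduce $\exp(-\sqrt{k}\,t/(3\alpha))$: ``inserting the factor $\sqrt{k}\ge 1$'' goes in the wrong direction, since the target bound is \emph{smaller} than the one you have as soon as $k>9$. The paper extracts the $\sqrt{k}$ from a different choice of $A$ and of the enlargement radius: it takes $A=\{x\colon \#\{i\colon |x_i|\ge 4\alpha\log(en/k)\}<k/2\}$ and enlarges by $\sqrt{k}\,s\,B_2^n$. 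The point is that a perturbation $y$ can raise the $k$-th order statistic above the threshold only if at least $k/2$ of its coordinates exceed $\sqrt{2}\,s$, which forces $|y|\ge\sqrt{k}\,s$; applying the concentration inequality at radius $\sqrt{k}\,s$ then costs $\exp(-\sqrt{k}\,s/\alpha)$. Your set $\{x\colon x_k^*<C_0\sqrt{n/k}\}$ with radius $t$ loses exactly this gain, because a single large coordinate of $v$ already spoils the order statistic.

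Second, the base-set estimate. Chebyshev only gives the threshold $C_0\sqrt{n/k}$, and, as you yourself observe, $\sqrt{n/k}$ is not dominated by $8\alpha\log(en/k)$, so the absorption step fails for large $n/k$ and the claimed range of $t$ is out of reach. The fix is to use the hypothesis once more \emph{before} choosing $A$: exponential concentration applied to the one-dimensional sets $\{|x_i|\le 2\}$ (which have probability at least $1/2$ since $\Var(X_i)=1$) gives $\Pr(|X_i|\ge 2+u)\le e^{-u/\alpha}$, and then Markov's inequality applied to $\sum_{i}\ind_{\{|X_i|\ge t\}}$ shows that $\{x\colon \#\{i\colon |x_i|\ge t\}<k/2\}$ has probability at least $1/2$ already for $t\ge 4\alpha\log(en/k)$ --- a logarithmic, not polynomial, threshold in $n/k$. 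Your fallback suggestions do not close this gap: the union bound over $k$-subsets, as the paper itself notes, works only for $t\gtrsim \sqrt{k}\log(en/k)$, a strictly stronger restriction than the one you must prove, and the proposed comparison of exponents in the intermediate range $8\alpha\log(en/k)\le t<C_0\sqrt{n/k}$ is never actually carried out.
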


\begin{proof}
Since $\Var(X_i)=1$ we have $\Pr(|X_i|\leq 2)\geq 1/2$ so $\Pr(|X_i|\geq 2+t)\leq \exp(-t/\alpha)$
for $t>0$. Let $\mu$ be the distribution of $X$. Then the set
\[
A(t)=\Big\{x\in\er^n\colon\ \#\{i\colon |x_i|\geq t\}< \frac{k}{2}\Big\}
\]
has measure $\mu$  at least $1/2$ for  $t\geq 4\alpha\log(en/k)$ -- indeed we have
for such $t$
\begin{align*}
1-\mu(A(t))&=\Pr\Big(\sum_{i=1}^n\ind_{\{|X_i|\geq t\}}\geq \frac{k}{2}\Big)\leq 
\frac{2}{k}\Ex\Big(\sum_{i=1}^n\ind_{\{|X_i|\geq t\}}\Big)
\\
&\leq \frac{2n}{k}\exp\Big(-\frac{t}{2\alpha}\Big)
\leq \frac{2n}{k}\Big(\frac{en}{k}\Big)^{-2}
\leq \frac{1}{2}.
\end{align*}

Let $A=A(4\alpha\log(en/k))$.
If $z=x+y\in A+\sqrt{k}sB_2^n$ then less than $k/2$ of $|x_i|$'s
are bigger than  $4\alpha \log(en/k)$ and less than $k/2$ of $|y_i|$'s are
bigger than $\sqrt{2}s$, so
\[
\Pr\Big(X_k^*\geq 4\alpha\log\Big(\frac{en}{k}\Big)+\sqrt{2}s\Big)\leq 
1-\mu(A+\sqrt{k}sB_2^n)\leq \exp(-\frac{1}{\alpha}\sqrt{k}s).
\]
\end{proof}

For log-concave vectors it is known that exponential inequality is equivalent to several other
functional inequalities such as Cheeger's and spectral gap -- see \cite{Mi} for a detailed
discussion and recent results. Strong open conjecture due
to Kannan, Lov\'asz and Simonovits \cite{KLS} states that every isotropic log-concave vector 
satisfies  Cheeger's (and therefore also exponential) inequality with a uniform constant. 
The conjecture however is wide open -- a recent result of Klartag \cite{Kl} 
shows that in the unconditional case
KLS conjecture holds up to $\log n$ constant (see also \cite{Hu} for examples 
of nonproduct distributions 
that satisfy spectral gap inequality with uniform constants). Best known upper 
bound for Cheeger's constant 
for general isotropic log-concave measure is $n^{\alpha}$ for some $\alpha \in (1/4,1/2)$ 
(see \cite{Mi}). 

The main result of this paper states that despite the approach via union bound or 
exponential concetration
fails the natural estimate for order statistics is valid. Namely we have

\begin{thm}
\label{estorder}
Let $X$ be $n$-dimensional log-concave isotropic vector. Then
\[ 
\Pr(X_k^*\geq t)\leq \exp\Big(-\frac{1}{C}\sqrt{k}t\Big) \quad 
\mbox{for }t\geq C\log\Big(\frac{en}{k}\Big).
\]
\end{thm}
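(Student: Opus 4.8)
The starting point is the identification of $\{X_k^*\ge t\}$ with a tail event for the counting statistic
\[
N_t:=\#\{i\le n:\ |X_i|\ge t\}=\sum_{i=1}^n\ind_{\{|X_i|\ge t\}},
\]
namely $\{X_k^*\ge t\}=\{N_t\ge k\}$; thus the task is to show $\Pr(N_t\ge k)\le\exp(-\sqrt k\,t/C)$ for $t\ge C\log(en/k)$. Each coordinate $X_i$ is a one-dimensional log-concave variable with $\Ex X_i=0$, $\Ex X_i^2=1$, so $\Pr(|X_i|\ge t)\le C\exp(-t/C)$ and hence $\Ex N_t=\sum_{i=1}^n\Pr(|X_i|\ge t)\le Cn\exp(-t/C)$; the assumption $t\ge C\log(en/k)$ will be used precisely to push $\Ex N_t$ well below $k$.

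The main engine is the moment estimate for $N_t$ supplied by Theorem \ref{estN}. I expect it to take the form
\[
\big(\Ex N_t^p\big)^{1/p}\le C\Big(\Ex N_t+\frac{p^2}{t^2}\Big)\le C\Big(n\exp(-t/C)+\frac{p^2}{t^2}\Big),
\]
valid for $p$ up to a suitable order. Heuristically this is a concentration-of-mass statement: from $N_t\,t^2\le\sum_{i=1}^nX_i^2\ind_{\{|X_i|\ge t\}}$ one sees that $N_t$ is controlled by the squared Euclidean length of the heavy part of $X$, whose $L_p$ behaviour is of Paouris type, while the factor $\exp(-t/C)$ reflects the smallness of $\Ex X_i^2\ind_{\{|X_i|\ge t\}}$. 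Establishing this bound without an independence assumption is the real difficulty and is deferred to Section 4; granting it, the rest is short.

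Given the moment bound, I would take the moment order $p$ equal to a small universal constant times $\sqrt k\,t$, so that $Cp^2/t^2\le k/4$; since $t\ge C\log(en/k)$ forces $Cn\exp(-t/C)\le k/4$ as well, we obtain $(\Ex N_t^p)^{1/p}\le k/2$, and Markov's inequality applied to $N_t^p$ gives $\Pr(N_t\ge k)\le k^{-p}\Ex N_t^p\le2^{-p}\le\exp(-\sqrt k\,t/C)$. The one thing to check is that this $p\asymp\sqrt k\,t$ lies in the range where Theorem \ref{estN} is effective; this can fail only for large $t$, say $t\ge C_1\sqrt k\log(en/k)$, and there the crude union bound suffices:
\[
\Pr(N_t\ge k)\le\binom nk\,2^k\max_{\#I=k}\max_{\eta}\Pr\big(\eta_iX_i\ge t\ \text{for all }i\in I\big)\le\Big(\frac{2en}{k}\Big)^k\exp\Big(-\frac{\sqrt k\,t}{C}\Big),
\]
since $\{\eta_iX_i\ge t\ \forall i\in I\}$ forces $|P_IX|\ge\sqrt k\,t$ for the isotropic log-concave vector $P_IX\in\er^k$ and \eqref{conc_mass} applies (as $t\ge C\log(en/k)\ge C$); for $t\ge C_1\sqrt k\log(en/k)$ the prefactor $(2en/k)^k$ is absorbed, leaving $\exp(-\sqrt k\,t/(2C))$.

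All the substance is therefore in Theorem \ref{estN}; the rest is bookkeeping, the delicate points being the consistent tracking of the absolute constants and the verification that the threshold for $t$ produced by $\Ex N_t\le Cn\exp(-t/C)$ matches, up to a constant, the threshold $C\log(en/k)$ of the statement. Two remarks on the shape of the result. The exponent is $\sqrt k\,t$ rather than the $kt$ of the unconditional Proposition \ref{uncond} because for a general log-concave vector the joint tails of coordinates are reached only through $|P_IX|$, at the level $\Pr(|P_IX|\ge\sqrt{|I|}\,s)\le\exp(-\sqrt{|I|}\,s/C)$, and it is this $\sqrt{|I|}$ that propagates to the final bound. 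The range $t\ge C\log(en/k)$ is in turn optimal up to a constant: for independent symmetric exponential coordinates $\Med(X_k^*)\ge\frac1C\log(en/k)$ when $k\le n/2$, so no smaller threshold can work.
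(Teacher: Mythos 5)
Your proposal is correct and is essentially the paper's own argument: the paper proves Theorem \ref{estorder} exactly by combining the moment bound of Theorem \ref{estN} (whose true form, $\Ex(t^2N_X(t))^p\le (Cp)^{2p}$ for $t\ge C\log(nt^2/p^2)$, is equivalent in the relevant range to the form you guessed, since the constraint forces $ne^{-t/C}\le p^2/t^2$) with Chebyshev/Markov at $p\asymp\sqrt{k}\,t$, handling two-sidedness via $N_X$ and $N_{-X}$ just as you implicitly do. The only deviation is your union-bound fallback for $t\ge C_1\sqrt{k}\log(en/k)$, which is correct but unnecessary: with $p\asymp\sqrt{k}\,t$ the admissibility condition of Theorem \ref{estN} becomes $t\ge C\log\big(n/(c^2k)\big)$, which is implied by $t\ge C'\log(en/k)$ uniformly in $t$, so there is no large-$t$ regime where the moment bound fails.
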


Our approach is based on the suitable estimate of moments of the process
$N_X(t)$, where
\[
N_X(t):=\sum_{i=1}^n\ind_{\{X_i\geq t\}}\quad t\geq 0.
\]

\begin{thm}
\label{estN}
For any isotropic log-concave vector $X$ and $p\geq 1$ we have
\[
\Ex(t^2N_X(t))^p\leq (Cp)^{2p} \quad \mbox{ for } t\geq C\log\Big(\frac{nt^2}{p^2}\Big).
\]
\end{thm}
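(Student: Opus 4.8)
The plan is to bound moments of $N_X(t)$ by relating $\Ex N_X(t)^p$ to the probability that many coordinates exceed $t$, and then to exploit log-concavity of $X$ together with the Paouris concentration of mass \eqref{conc_mass}. First I would observe that $N_X(t)^p$ is controlled by $\binom{n}{m}$-type sums: if $N_X(t)\geq m$ then there is an index set $I$ with $|I|=m$ and $\langle X_i\rangle_{i\in I}$ all $\geq t$, hence $|P_IX|\geq t\sqrt m$. This reduces matters to estimating, for each $m$, the quantity $\Pr(\exists\, I,\ |I|=m,\ |P_IX|\geq t\sqrt m)$, which by a union bound over $\binom nm\leq (en/m)^m$ sets is at most $(en/m)^m\sup_{|I|=m}\Pr(|P_IX|\geq t\sqrt m)$. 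Since each $P_IX$ is an $m$-dimensional log-concave vector which is isotropic (marginals of an isotropic log-concave vector are isotropic), Paouris' inequality \eqref{conc_mass} applied in dimension $m$ gives $\Pr(|P_IX|\geq Cs\sqrt m)\leq \exp(-s\sqrt m)$ for $s\geq 1$; taking $s\asymp t$ yields a bound like $\exp(-t\sqrt m/C)$ when $t\geq C$.

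Next I would combine these ingredients. We have, roughly,
\[
\Pr(N_X(t)\geq m)\leq \Big(\frac{en}{m}\Big)^m\exp\Big(-\frac{1}{C}t\sqrt m\Big),
\]
valid for $t$ above an absolute constant. The right-hand side is small once $t\sqrt m$ dominates $m\log(en/m)$, i.e. once $\sqrt m\lesssim t/\log(en/m)$; more precisely, writing $u=t^2N_X(t)$, one wants to show $\Pr(u\geq (Cp)^2)$ decays fast enough in $p$ to give $\Ex u^p\leq (Cp)^{2p}$. Setting $m=(Cp)^2/t^2$, the binomial factor becomes $\exp(m\log(en/m))=\exp\big((Cp/t)^2\log(ent^2/(Cp)^2)\big)$, and the hypothesis $t\geq C\log(nt^2/p^2)$ is exactly what makes $t^2$ beat $\log(ent^2/p^2)$, so that $m\log(en/m)\leq \frac1{2C}t\sqrt m\cdot(\text{something})$ — here one has to be careful that $t\sqrt m=Cpt$, so the exponential gain $\exp(-t\sqrt m/C)=\exp(-pt)$ and the loss $\exp(m\log(en/m))$ are balanced precisely by the log-condition on $t$. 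Integrating the tail $\Pr(u\geq \lambda)$ against $p\lambda^{p-1}\,d\lambda$, or equivalently summing $\Pr(u\geq (Cp)^2)(Cp)^{2p}$ over dyadic scales, then delivers $\Ex(t^2N_X(t))^p\leq (Cp)^{2p}$.

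The main obstacle, and the point requiring genuine care rather than bookkeeping, is handling the regime of \emph{large} $m$ — that is, values of $m$ comparable to $n$ or to $t^2$ — where the union bound over $\binom nm$ sets is wasteful and the Paouris bound $\exp(-t\sqrt m/C)$ alone need not beat $(en/m)^m$. For such $m$ one cannot afford the crude counting; instead I would want a more global estimate, perhaps bounding $\Pr(\sum_i \ind_{\{X_i\geq t\}}\geq m)$ directly by applying \eqref{conc_mass} or a $\psi_1$-type tail to $\langle X,\theta\rangle$ for a well-chosen $\theta$ (e.g. $\theta$ proportional to the indicator of the exceeding coordinates, which has $|\theta|=\sqrt m$ and $\langle X,\theta\rangle\geq t\sqrt m$ on the relevant event), and controlling the supremum over the $\binom nm$ choices of $\theta$ by a chaining or net argument rather than a naive union bound. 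Since the hypothesis $t\geq C\log(nt^2/p^2)$ forces $p^2\geq nt^2e^{-t/C}$, i.e. $m=(Cp/t)^2\gtrsim n e^{-t/C}$, the dangerous large-$m$ region is in fact where $m$ is only polynomially (in $\log$) smaller than $n$, and there the $\log(en/m)$ factor is genuinely of order $t$, so the balance is tight and must be tracked explicitly; this is presumably why the authors isolate Theorem \ref{estN} and devote the whole of Section 4 to it.
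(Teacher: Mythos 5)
Your first step is fine as far as it goes: $\{N_X(t)\ge m\}$ forces some $I$ with $\#I=m$ and $|P_IX|\ge t\sqrt m$, the projections $P_IX$ are isotropic and log-concave, and Paouris gives $\Pr(|P_IX|\ge t\sqrt m)\le\exp(-t\sqrt m/C)$ for $t\ge C$. But the resulting bound
\[
\Pr(N_X(t)\ge m)\le\Big(\frac{en}{m}\Big)^m\exp\Big(-\frac1Ct\sqrt m\Big)
\]
is useful only when $\sqrt m\,\log(en/m)\lesssim t$, and the theorem must be proved with $m\approx (Cp/t)^2$ under the much weaker hypothesis $\log(en/m)\approx\log(nt^2/p^2)\le t/C$. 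When $p\gg t$ (equivalently $m\gg 1$) the entropy term $m\log(en/m)$ can be of order $mt/C$, which crushes the Paouris gain $t\sqrt m/C$. This is exactly the regime needed for the application: in the proof of Theorem \ref{estorder} one takes $p\asymp t\sqrt k$, so $m\asymp k$ can be of order $n$, and your union bound would require $t\gtrsim\sqrt k\log(en/k)$ --- precisely the limitation the paper points out right after Proposition \ref{uncond} when explaining why the naive union bound fails. So the heart of the theorem is the regime you defer to the last paragraph, and there your proposal offers only a gesture (``chaining or net argument'' for $\sup_{\#I=m}|P_IX|$) rather than an argument; note moreover that bounds for $\sup_{\#I=m}|P_IX|$ are stated in the paper as a \emph{consequence} of these order-statistics estimates (the forthcoming \cite{ALLPT}), so leaning on such a bound risks circularity, and for $\psi_1$-type processes chaining does not remove the $m\log(en/m)$ entropy cost that is the whole obstruction.

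The missing idea, which is what Section 4 actually supplies, is a conditional self-improvement rather than a one-shot union bound: the event $B_{i_1,\ldots,i_s}=\{X_{i_1}\ge t,\ldots,X_{i_s}\ge t\}$ is convex, so $X$ conditioned on it is still log-concave, and Paouris applied to this conditioned vector shows (Proposition \ref{conditional}) that only about $t^{-2}\log^2\Pr(B_{i_1,\ldots,i_s})$ coordinates can have conditional second moment large enough to exceed $t$ with non-negligible conditional probability, the rest contributing only $ne^{-t/C}$. The paper then expands $\Ex N_X(t)^l=\sum\Pr(B_{i_1,\ldots,i_l})$, stratifies the multi-indices by the size of $\Pr(B_{i_1,\ldots,i_l})$, runs an induction on $s$ using \eqref{cond1}, and counts the surviving index sequences through the profile of conditional probability drops via Lemma \ref{combf}. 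Without some substitute for this conditioning mechanism (or another way to beat the binomial entropy when $m$ is large), the proposal does not prove the stated moment bound in the claimed range of $t$.
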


We postpone a long and bit technical proof till the last section of the paper. Let us only mention
at this point that it  is based on two ideas. One is the Paouris large deviation inequality
\eqref{conc_mass} and another is an observation that if we restrict a log-concave distribution 
to a convex set it is still log-concave.

\begin{proof}[Proof of Theorem \ref{estorder}]
Observe that $X^*_k\geq t$ implies that $N_X(t)\geq k/2$ or $N_{-X}(t)\geq k/2$ and
vector $-X$ is also isotropic and log-concave.
So by Theorem \ref{estN} and Chebyshev's inequality we get
\[
\Pr(X^*_k\geq t)\leq \Big(\frac{2}{k}\Big)^p\big(\Ex N_X(t)^p+\Ex N_{-X}(t)^p\big)
\leq 2 \Big(\frac{Cp}{t\sqrt{k}}\Big)^{2p} 
\]
provided that $t\geq C\log(nt^2/p^2)$. So it is enough to take $p=\frac{1}{eC}t\sqrt{k}$
and notice that the restriction on $t$ follows by the assumption that $t\geq C\log(en/k)$.
\end{proof}

As we already noticed one of the main tools in the proof of Theorem \ref{estN} is the Paouris 
concentration of mass.  One may however also do the opposite and derive large deviations for the
Euclidean norm of $X$ from our estimate of moments of $N_X(t)$ and observation that
the distribution of $UX$ is again log-concave and isotropic for any rotation $U$.
More precisely  the following statement holds.

\begin{prop}
Suppose that $X$ is a random vector in $\er^n$ such that for some constants $A_1,A_2<\infty$
and any $U\in O(n)$,
\[
\Ex\big(t^2N_{UX}(t)\big)^l\leq (A_1l)^{2l}
\quad \mbox{ for } t\geq A_2,\ l\geq \sqrt{n}.
\]
Then
\[
\Pr(|X|\geq t\sqrt{n})\leq \exp\Big(-\frac{1}{CA_1}t\sqrt{n}\Big) \quad \mbox{ for } 
t\geq \max\{CA_1,A_2\}.
\]
\end{prop}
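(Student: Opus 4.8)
The plan is to exploit the rotation invariance of the Euclidean norm together with the hypothesis applied to random rotations of $X$. Let $U$ be a matrix distributed according to the Haar measure on $O(n)$, independent of $X$. Then $|UX|=|X|$, while conditionally on $X=x$ the vector $UX$ is uniformly distributed on the sphere of radius $|x|$, so a fixed proportion of its coordinates exceeds the level $t$ as soon as $|x|\gtrsim t\sqrt n$. On the other hand the hypothesis, applied to each fixed rotation and then averaged over $U$, forces $t^2N_{UX}(t)$ to have all moments of order $l\ge\sqrt n$ bounded by $(A_1l)^{2l}$. Confronting these two facts for a well-chosen $l$ yields the claim.

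Concretely, I would fix $t\ge\max\{CA_1,A_2\}$ (with a universal constant $C$ to be determined) and a parameter $l\ge\sqrt n$, and estimate $\Ex N_{UX}(t)^l$ from both sides. For the lower bound, condition on $X$ and use Jensen's inequality in $U$:
\[
\Ex\bigl[N_{Ux}(t)^l\mid X=x\bigr]\ \ge\ \bigl(\Ex N_{Ux}(t)\bigr)^l\ =\ \bigl(n\,\Pr(|x|\,Z_1\ge t)\bigr)^l,
\]
where $Z=(Z_1,\dots,Z_n)$ is uniformly distributed on $S^{n-1}$ and we used that all coordinates of $Ux$ have the same law. Since $\Pr(\sqrt n\,Z_1\ge 1)\ge c_0$ for some universal $c_0>0$, on the event $\{|X|\ge t\sqrt n\}$ this is at least $(c_0 n)^l$, hence $\Ex N_{UX}(t)^l\ge (c_0 n)^l\,\Pr(|X|\ge t\sqrt n)$. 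For the upper bound, $t\ge A_2$ and $l\ge\sqrt n$ give, after averaging the hypothesis over $U$,
\[
\Ex N_{UX}(t)^l\ =\ t^{-2l}\,\Ex\bigl(t^2N_{UX}(t)\bigr)^l\ \le\ \Bigl(\frac{A_1^2l^2}{t^2}\Bigr)^{l}.
\]

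Combining the two bounds gives $\Pr(|X|\ge t\sqrt n)\le\bigl(A_1^2l^2/(c_0 t^2 n)\bigr)^l$ for every $l\ge\sqrt n$, and it remains to optimize in $l$. Over all $l>0$ the right-hand side is minimized at $l_*=\sqrt{c_0}\,t\sqrt n/(eA_1)$, where it equals $e^{-2l_*}$; the constraint $l_*\ge\sqrt n$ reads exactly $t\ge (e/\sqrt{c_0})A_1$, which is guaranteed once $C\ge e/\sqrt{c_0}$. With this choice we obtain
\[
\Pr(|X|\ge t\sqrt n)\ \le\ \exp\Bigl(-\frac{2\sqrt{c_0}}{e}\,\frac{t\sqrt n}{A_1}\Bigr)\ \le\ \exp\Bigl(-\frac{t\sqrt n}{CA_1}\Bigr),
\]
as asserted.

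The only ingredient that is not purely formal is the uniform spherical estimate $\Pr(\sqrt n\,Z_1\ge 1)\ge c_0$, and I expect this to be the (minor) main obstacle. It can be extracted from the Gaussian representation $Z=g/|g|$ with $g$ standard Gaussian on $\er^n$: for $n$ large one bounds $\Pr(\sqrt n\,Z_1\ge1)\ge\Pr(g_1\ge2)-\Pr(|g|^2>4n)$ via a standard $\chi^2$ deviation inequality, and the finitely many small values of $n$ are checked directly from the explicit density of $Z_1$, which is proportional to $(1-u^2)^{(n-3)/2}$ on $[-1,1]$.
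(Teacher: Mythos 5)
Your proof is correct and takes essentially the same route as the paper: average over a Haar-random rotation, lower-bound $\Ex_U N_{UX}(t)$ by $n\Pr(|X|Z_1\ge t)$ with $Z$ uniform on $S^{n-1}$, and optimize the moment order $l\asymp t\sqrt{n}/A_1$. The only cosmetic differences are that you use Jensen's inequality with a single random rotation and a direct spherical estimate $\Pr(\sqrt{n}Z_1\ge 1)\ge c_0$, where the paper uses H\"older over $l$ independent rotations and Paley--Zygmund; the resulting bounds are the same up to constants.
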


\begin{proof}
Let us fix $t\geq A_2$. 
H\"older's inequality gives that for any $U_1,\ldots,U_n\in O(n)$
\[
\Ex\prod_{i=1}^lN_{U_iX}(t)\leq \Big(\prod_{i=1}^l\Ex N_{U_iX}(t)^l\Big)^{1/l}
\leq \Big(\frac{A_1l}{t}\Big)^{2l}
\quad \mbox{ for } l\geq \sqrt{n}.
\] 
Now let $U_1,\ldots,U_l$ be independent random rotations in $O(n)$ (distributed according to
the Haar measure) then for $l\geq \sqrt{n}$
\begin{align*}
\Big(\frac{A_1l}{t}\Big)^{2l}&\geq \Ex_X\Ex_U\prod_{i=1}^lN_{U_iX}(t)=
\Ex_X(\Ex_{U_1} N_{U_1X}(t))^l
=\Ex_X(n\Pr_Y(\langle X,Y\rangle\geq t))^l
\\
&=n^l\Ex_X(\Pr_Y(|X|Y_1\geq t))^l,
\end{align*}
where $Y$ is a random vector uniformly distributed on $S^{n-1}$. 
Since $Y_1$ is symmetric, $\Ex Y_1^2=1/n$ and
$\Ex Y_1^4\leq C/n^2$ we get that $\Pr(Y_1^2\geq \frac{1}{4n})\geq 1/C_1$ which gives
\[
\Pr(|X|\geq 2t\sqrt{n})\leq \Ex_X\big(C_1\Pr_Y(|X|Y_1\geq t)\big)^l\leq
\Big(\frac{C_1A_1^2l^2}{t^2n}\Big)^l.
\]
To conclude the proof it is enough to take $l=\big\lceil \frac{1}{\sqrt{eC_1}A_1}\sqrt{n}t\big\rceil$.
\end{proof}

\section{Concentration of $l_r$ norms}

The aim of this section is to derive Paouris--type estimates  for concentration of 
$\|X\|_r=(\sum_{i=1}^n |X_i|^r)^{1/r}$. We start with presenting two simple examples.

\medskip

\noindent
{\bf Example 1.}
Let coordinates of $X$ be independent exponential r.v's, then
\[
(\Ex\|X\|_r^r)^{1/r}=(n\Ex|X_1|^r)^{1/r}\geq \frac{1}{C}rn^{1/r} \quad
\mbox{ for }r\in [1,\infty),
\]
\[
\Ex\|X\|_{\infty}\geq \frac{1}{C}\log n
\]
and 
\[
(\Ex\|X\|_r^p)^{1/p}\geq (\Ex|X_1|^p)^{1/p}\geq \frac{p}{C} \quad \mbox{ for }p\geq 2,r\geq 2. 
\]
It is also known that in the independent exponential case weak and strong moments are comparable
\cite{La}, hence for $r\geq 2$ 
\begin{align*}
(\Ex\|X\|_r^r)^{1/r}&=
 \Big(\Ex\sup_{\|a\|_{r'}\leq 1}\Big|\sum_{i}a_iX_i\Big|^r\Big)^{1/r}
\\
&\leq (\Ex\|X\|_r^2)^{1/2}+C\sup_{\|a\|_{r'}\leq 1}\Big(\Ex\Big|\sum_{i}a_iX_i\Big|^r\Big)^{1/r}
\\
&\leq (\Ex\|X\|_r^2)^{1/2}+Cr\sup_{\|a\|_{r'}\leq 1}\Big(\Ex\Big|\sum_{i}a_iX_i\Big|^2\Big)^{1/2}
\leq (\Ex\|X\|_r^2)^{1/r}+Cr.
\end{align*}
Therefore  we get
\[
(\Ex\|X\|_r^p)^{1/p}\geq (\Ex\|X\|_r^2)^{1/2}\geq \frac{1}{C}rn^{1/r} \quad
\mbox{ for } p\geq 2 \mbox{ and } n\geq C^r.
\]

\medskip
\noindent
{\bf Example 2.} For $1\leq r\leq 2$ let $X$ be such isotropic random vector 
that $Y=(X_1+\ldots+X_n)/\sqrt{n}$
has an exponential distribution then by H\"older's inequality  $\|X\|_r\geq n^{1/r-1/2}Y$ and
\[
(\Ex\|X\|_r^p)^{1/p}\geq n^{1/r-1/2}\|Y\|_p\geq \frac{1}{C}n^{1/r-1/2}p
\quad \mbox{ for } p\geq 2,\ 1\leq r\leq 2.
\]

\medskip

The examples above show that the best we can hope is
\begin{equation}
\label{momsmallr}
(\Ex\|X\|_r^p)^{1/p}\leq C(n^{1/r}+n^{1/r-1/2}p)\quad \mbox{ for } 1\leq r\leq 2,
\end{equation}
\begin{equation}
\label{momlarger}
(\Ex\|X\|_r^p)^{1/p}\leq C(rn^{1/r}+p) \quad \mbox{ for }  r\in [2,\infty)
\end{equation}
and
\begin{equation}
\label{mominfty}
(\Ex\|X\|_{\infty}^p)^{1/p}\leq C(\log n+p).
\end{equation}

Or in terms  of tails,
\begin{equation}
\label{tailsmallr}
\Pr(\|X\|_r\geq t)\leq \exp\Big(-\frac{1}{C}tn^{1/2-1/r}\Big) \quad 
\mbox{ for } t\geq Cn^{1/r},\ r\in [1,2],
\end{equation}
\begin{equation}
\label{taillarger}
\Pr(\|X\|_r\geq t)\leq \exp\Big(-\frac{1}{C}t\Big) \quad 
\mbox{ for } t\geq Crn^{1/r},\ r\in[2,\infty)
\end{equation}
and
\begin{equation}
\label{tailinfty}
\Pr(\|X\|_{\infty}\geq t)\leq \exp\Big(-\frac{1}{C}t\Big) \quad \mbox{ for } t\geq C\log n.
\end{equation}

Case $r\in [1,2]$ is a simple consequence of Paouris theorem.

\begin{prop}
Estimates \eqref{momsmallr} and \eqref{tailsmallr} hold for all isotropic log-concave vectors $X$.
\end{prop}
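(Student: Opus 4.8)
The plan is to reduce the $l_r$ bound for $r\in[1,2]$ directly to the Paouris concentration of mass \eqref{conc_mass} via H\"older's inequality, which controls $\|X\|_r$ by $\|X\|_2=|X|$. First I would record the elementary pointwise inequality: for $x\in\er^n$ and $r\in[1,2]$, since the map $t\mapsto t^{2/r}$ is convex on $[0,\infty)$, we have $\|x\|_r^r=\sum_i|x_i|^r\le n^{1-r/2}\big(\sum_i|x_i|^2\big)^{r/2}$ by Jensen (equivalently, H\"older with exponents $2/r$ and $2/(2-r)$), hence $\|x\|_r\le n^{1/r-1/2}|x|$. So it suffices to estimate $|X|$.

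Next I would invoke \eqref{conc_mass}: for an isotropic log-concave vector $X$ in $\er^n$, $\Pr(|X|\ge Cs\sqrt n)\le\exp(-s\sqrt n)$ for $s\ge1$. Combining with the pointwise bound, $\Pr(\|X\|_r\ge t)\le\Pr\big(|X|\ge t n^{1/2-1/r}\big)$, and writing $tn^{1/2-1/r}=Cs\sqrt n$, i.e.\ $s=tn^{-1/r}/C$, the condition $s\ge1$ becomes $t\ge Cn^{1/r}$ and the bound reads $\Pr(\|X\|_r\ge t)\le\exp\big(-\tfrac1C t n^{1/2-1/r}\big)$, which is exactly \eqref{tailsmallr}. (One should also note $\Ex|X|^2=n$ so $\Ex\|X\|_r^2\le n^{2/r-1}\cdot n=n^{2/r}$, giving the $t\le Cn^{1/r}$ regime trivially and anchoring the moment statement.)

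Finally, to get the moment bound \eqref{momsmallr} I would integrate the tail estimate: for $p\ge1$,
\[
(\Ex\|X\|_r^p)^{1/p}\le n^{1/r-1/2}(\Ex|X|^p)^{1/p},
\]
and $(\Ex|X|^p)^{1/p}\lesssim \sqrt n + p/\sqrt n$ follows from \eqref{conc_mass} by the standard layer-cake computation $\Ex|X|^p=\int_0^\infty ps^{p-1}\Pr(|X|\ge s)\,ds$, splitting at $s\asymp\sqrt n$ and using $\Ex|X|^2=n$ below the threshold. Multiplying through by $n^{1/r-1/2}$ yields $(\Ex\|X\|_r^p)^{1/p}\le C(n^{1/r}+n^{1/r-1/2}p)$.

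There is essentially no obstacle here: the only point requiring a half-line of care is the direction of H\"older's inequality (one wants to \emph{bound above} $\|x\|_r$ for small $r$ by the $l_2$ norm, which is the "wrong" inclusion for norms but correct once the dimensional factor $n^{1/r-1/2}$ is inserted, and this factor is sharp by Example 2). Everything else is a black-box application of \eqref{conc_mass} together with routine tail-to-moment integration.
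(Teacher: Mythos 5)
Your proposal is correct and follows the same route as the paper: bound $\|X\|_r\leq n^{1/r-1/2}\|X\|_2$ by H\"older's inequality and then apply the Paouris concentration of mass \eqref{conc_mass}, with the moment bound obtained by the standard tail integration. The extra details you supply (the layer-cake computation and the check that the dimensional factor is sharp) are fine but not needed beyond what the paper's one-line argument already contains.
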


\begin{proof}
We have $\|X\|_r\leq n^{1/r-1/2}\|X\|_2$ by H\"older's inequality, hence \eqref{momsmallr}
(and therefore also \eqref{tailsmallr}) immediately follows by Paouris result.
\end{proof}

Case $r=\infty$ is also very simple

\begin{prop}
\label{inftynorm}
Estimates \eqref{mominfty} and \eqref{tailinfty} hold for all isotropic log-concave vectors $X$.
\end{prop}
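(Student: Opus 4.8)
The plan is to prove \eqref{tailinfty} directly and then deduce \eqref{mominfty} by integrating the tail. The key observation is that $\|X\|_\infty = X_1^*$, so the statement is exactly a special case of the order statistics estimate with $k=1$. Indeed, Theorem \ref{estorder} applied with $k=1$ gives $\Pr(X_1^*\geq t)\leq \exp(-t/C)$ for $t\geq C\log(en)$, which is precisely \eqref{tailinfty} (after adjusting the universal constant so that $C\log n$ absorbs $C\log(en)$, which costs only a factor $2$ once $n\geq 2$; the case $n=1$ being trivial from one-dimensional log-concavity). So the first and main step is simply to invoke Theorem \ref{estorder}.

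Alternatively, and perhaps more in the spirit of wanting a self-contained elementary argument for this easy case, I would argue by a union bound over coordinates. Since $X$ is isotropic, each $X_i$ has variance one and mean zero, and a one-dimensional log-concave random variable with variance one satisfies $\Pr(|X_i|\geq s)\leq \exp(-s/C)$ for $s\geq C$ (this is the standard fact that log-concave tails are subexponential, with the scale fixed by the variance). Then
\[
\Pr(\|X\|_\infty\geq t)=\Pr\Big(\bigcup_{i=1}^n\{|X_i|\geq t\}\Big)\leq \sum_{i=1}^n\Pr(|X_i|\geq t)\leq n\exp(-t/C),
\]
and for $t\geq C'\log n$ the right-hand side is at most $\exp(-t/(2C))$, giving \eqref{tailinfty}. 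Note this union bound suffices here precisely because $k=1$, so there is no loss of the kind discussed after Proposition \ref{uncond}.

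For the moment bound \eqref{mominfty} I would integrate: writing $M=C\log n$ for the threshold,
\[
\Ex\|X\|_\infty^p=\int_0^\infty pt^{p-1}\Pr(\|X\|_\infty\geq t)\,dt\leq M^p+\int_M^\infty pt^{p-1}e^{-t/C}\,dt,
\]
and the tail integral is bounded by $(C)^p\Gamma(p+1)\leq (Cp)^p$ up to adjusting constants, so $(\Ex\|X\|_\infty^p)^{1/p}\leq C(\log n+p)$. A cleaner route to the same conclusion: the tail bound \eqref{tailinfty} says $\Pr(\|X\|_\infty\geq C\log n+Cs)\leq e^{-s}$, i.e. $\|X\|_\infty$ is stochastically dominated by $C\log n$ plus $C$ times a standard exponential variable $E$, whence $(\Ex\|X\|_\infty^p)^{1/p}\leq C\log n+C(\Ex E^p)^{1/p}\leq C\log n+Cp$ since $(\Ex E^p)^{1/p}\leq p$.

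There is essentially no obstacle here; the content is entirely in Theorem \ref{estorder} (or, for the self-contained version, in the elementary one-dimensional subexponential tail bound for isotropic log-concave variables), and the rest is the routine union bound and tail integration sketched above. The only point requiring a word of care is the normalization of the universal constant so that the threshold $C\log(en)$ coming from Theorem \ref{estorder} is absorbed into $C\log n$, together with dispatching the degenerate small-$n$ cases separately.
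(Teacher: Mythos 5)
Your second, union-bound argument is exactly the paper's proof (the paper's entire proof is the single display $\Pr(\|X\|_{\infty}\geq t)\leq \sum_{i=1}^n\Pr(|X_i|\geq t)\leq n\exp(-t/C)$, relying on the one-dimensional subexponential tail of isotropic log-concave coordinates), and your tail integration for \eqref{mominfty} is the routine step the paper leaves implicit. The alternative route through Theorem \ref{estorder} with $k=1$ is also valid but invokes far heavier machinery than needed.
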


\begin{proof}
We have
\[
\Pr(\|X\|_{\infty}\geq t)\leq \sum_{i=1}^n\Pr(|X_i|\geq t)\leq n\exp(-t/C).
\]
\end{proof}

What is left is the case $2<r<\infty$ --
we would like to obtain \eqref{taillarger} and \eqref{momlarger}. We almost get it -- except 
that  constants explode when $r$ approaches 2.

\begin{thm}
\label{thm_larger}
For any $\delta>0$ there exist constants $C_1(\delta),C_2(\delta)\leq C\delta^{-1/2}$ 
such that for any $r\geq 2+\delta$,
\[
\Pr(\|X\|_r\geq t)\leq \exp\Big(-\frac{1}{C_1(\delta)}t\Big) \quad \mbox{ for } 
t\geq C_1(\delta)rn^{1/r}
\]
and
\[
(\Ex\|X\|_r^p)^{1/p}\leq 
C_2(\delta)\Big(rn^{1/r}+p\Big) \quad \mbox{ for } p\geq 2.
\]
\end{thm}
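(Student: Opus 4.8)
The natural route is to deduce Theorem~\ref{thm_larger} from the order statistics bound of Theorem~\ref{estorder} by writing $\|X\|_r^r=\sum_{k=1}^n(X_k^*)^r$. For $p\ge r$ the triangle inequality in $L^{p/r}(\Pr)$, applied to the nonnegative variables $(X_k^*)^r$, gives
\begin{align*}
\big(\Ex\|X\|_r^p\big)^{1/p}
&=\Big(\Ex\Big(\sum_{k=1}^n(X_k^*)^r\Big)^{p/r}\Big)^{1/p}
\le\Big(\sum_{k=1}^n\big(\Ex(X_k^*)^p\big)^{r/p}\Big)^{1/r}.
\end{align*}
For $2\le p<r$ there is nothing new to do: $(\Ex\|X\|_r^p)^{1/p}\le(\Ex\|X\|_r^r)^{1/r}$ by monotonicity of $L^s$ norms, and the right-hand side is covered by the case $p=r$ (losing only the harmless factor arising from $rn^{1/r}+r\le 2\,rn^{1/r}$). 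So the whole statement reduces to bounding the $p$-th moments of the individual order statistics and then summing over $k$.

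For the first part I would integrate the tail estimate of Theorem~\ref{estorder}: since $\Pr(X_k^*\ge t)\le\exp(-\sqrt k\,t/C)$ for $t\ge C\log(en/k)$, splitting $\Ex(X_k^*)^p=\int_0^\infty pt^{p-1}\Pr(X_k^*\ge t)\,dt$ at $t_0=C\log(en/k)$ and using $\Gamma(p+1)\le p^p$ yields
\[
\big(\Ex(X_k^*)^p\big)^{1/p}\le C\Big(\log\tfrac{en}{k}+\tfrac{p}{\sqrt k}\Big).
\]
Feeding this back and using $(a+b)^r\le 2^{r-1}(a^r+b^r)$,
\[
\sum_{k=1}^n\big(\Ex(X_k^*)^p\big)^{r/p}\le(2C)^r\Big(\sum_{k=1}^n\big(\log\tfrac{en}{k}\big)^r+p^r\sum_{k=1}^n k^{-r/2}\Big).
\]
The first sum is dominated by an integral: $x\mapsto(\log(en/x))^r$ is decreasing on $(0,n]$, and the substitution $u=\log(en/x)$ gives $\sum_{k=1}^n(\log(en/k))^r\le\int_0^n(\log(en/x))^r\,dx=e\,n\,\Gamma(r+1)\le(Cr)^r n$, so its $1/r$-th power is at most $C\,rn^{1/r}$. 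The second sum is a tail of the zeta series, $\sum_{k\ge1}k^{-r/2}=\zeta(r/2)\le 1+\tfrac{2}{r-2}\le 1+\tfrac2\delta$ for $r\ge2+\delta$, and raising to the power $1/r\le\tfrac12$ contributes a factor of order $\delta^{-1/2}$. Combining, $(\Ex\|X\|_r^p)^{1/p}\le C_2(\delta)(rn^{1/r}+p)$ with $C_2(\delta)=O(\delta^{-1/2})$, which is the second assertion; the first follows by Markov's inequality, since $(\Ex\|X\|_r^p)^{1/p}\le M$ yields $\Pr(\|X\|_r\ge eM)\le e^{-p}$, and taking $M=C_2(\delta)(rn^{1/r}+p)$ with $p=t/(eC_2(\delta))-rn^{1/r}\ge2$ (valid once $t\ge 2eC_2(\delta)\,rn^{1/r}$, using $rn^{1/r}\ge r\ge2$) turns this into $\Pr(\|X\|_r\ge t)\le\exp\big(-t/(2eC_2(\delta))\big)$, giving $C_1(\delta)=2eC_2(\delta)$.

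Given Theorem~\ref{estorder} the argument is essentially careful bookkeeping, so the one genuinely delicate feature is that it must reproduce the blow-up as $r\downarrow 2$ shown necessary by the two examples of this section. That blow-up is entirely localised in the series $\sum_k k^{-r/2}$, which behaves like $\tfrac{2}{r-2}$ near $r=2$ and hence forces constants of order $\delta^{-1/2}$, while every other estimate above is uniform in $r$ (indeed the argument degenerates gracefully to the bound for $\|X\|_\infty$ as $r\to\infty$). Some care is also required in separating the regimes $p\ge r$ and $p<r$: the subadditivity of $x\mapsto x^{p/r}$ that one might try to use when $p<r$ is far too lossy (it would replace $rn^{1/r}$ by $pn^{1/p}$), so in that range one must instead invoke monotonicity of moments and reduce to $p=r$.
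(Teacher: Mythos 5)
Your argument is correct and, like the paper, it rests entirely on Theorem \ref{estorder}; but you organize it at the level of moments rather than tails, which bypasses the paper's intermediate Proposition \ref{estlarger}. The paper bounds $\|X\|_r^r$ by the dyadic sum $\sum_{k\le \log_2 n} 2^k|X_{2^k}^*|^r$, applies the order-statistic tail bound to each dyadic block with thresholds $t_k$ chosen so that every block contributes the same exponent, and then union-bounds; the geometric series $\sum_k 2^{k(2-r)/2}$ produces the same $(r/(r-2))^{1/r}\sim\delta^{-1/2}$ blow-up that you obtain from $\zeta(r/2)^{1/r}$, so the source of the degeneracy at $r=2$ is identical in both proofs. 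What your moment/Minkowski route buys: the estimate $(\Ex(X_k^*)^p)^{1/p}\le C(\log(en/k)+p/\sqrt k)$ combined with the triangle inequality in $L^{p/r}$ (for $p\ge r$, with the correct reduction to $p=r$ when $2\le p<r$) yields the moment bound directly for all $p\ge 2$ and avoids both the $(\log_2 n+1)$ prefactor and the extra $(r/(r-2))^{1/r}\log n$ term in the threshold of Proposition \ref{estlarger}, which the paper must afterwards absorb via $\log n\le rn^{1/r}$; the tail estimate then follows by Chebyshev exactly as you describe. What the paper's tail-level argument buys is the slightly sharper standalone statement of Proposition \ref{estlarger}. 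Your individual steps all check out: splitting $\int_0^\infty pt^{p-1}\Pr(X_k^*\ge t)\,dt$ at $C\log(en/k)$ and using $\Gamma(p+1)\le p^p$, the integral comparison giving $\sum_{k\le n}\log^r(en/k)\le (Cr)^r n$, the bound $\zeta(r/2)\le 1+2/(r-2)$ whose $1/r$-th power is $O(\delta^{-1/2})$, and the choice $p=t/(eC_2(\delta))-rn^{1/r}$ in the Chebyshev step, where $rn^{1/r}\ge 2$ guarantees $p\ge 2$ once $t\ge 2eC_2(\delta)rn^{1/r}$.
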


The proof of Theorem \ref{thm_larger} is based on the following slightly more precise estimate.

\begin{prop}
\label{estlarger}
For $r>2$  we have
\[
\Pr(\|X\|_r\geq t)\leq \exp\Big(-\frac{1}{C}\Big(\frac{r-2}{r}\Big)^{1/r}t\Big) \quad \mbox{ for } 
t\geq C\Big(rn^{1/r}+\Big(\frac{r}{r-2}\Big)^{1/r}\log n\Big)
\]
or in terms of moments
\[
(\Ex\|X\|_r^p)^{1/p}\leq 
C\Big(rn^{1/r}+\Big(\frac{r}{r-2}\Big)^{1/r}(\log n+p)\Big) \quad \mbox{ for } p\geq 2.
\]
\end{prop}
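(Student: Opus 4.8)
The plan is to pass from $\|X\|_r$ to the order statistics $X_1^*\ge\cdots\ge X_n^*$ via the identity $\|X\|_r^r=\sum_{k=1}^n(X_k^*)^r$ and then feed in Theorem \ref{estorder}. First I would record its one-dimensional consequence: since $\Pr(X_k^*\ge t)\le\exp(-\sqrt k\,t/C)$ for $t\ge C\log(en/k)$, integrating the tail over $[C\log(en/k),\infty)$ shows that for every $q\ge 1$
\[
\|X_k^*\|_q:=(\Ex(X_k^*)^q)^{1/q}\le C\Big(\log\frac{en}{k}+\frac{q}{\sqrt k}\Big),
\]
the threshold producing the logarithmic term and the exponential tail beyond it producing the term $Cq/\sqrt k$.

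Next, fix a moment exponent $p\ge r$. Since $p/r\ge 1$, Minkowski's inequality in $L_{p/r}$ applied to the nonnegative summands $(X_k^*)^r$ gives
\[
(\Ex\|X\|_r^p)^{r/p}=\Big\|\sum_{k=1}^n(X_k^*)^r\Big\|_{p/r}\le\sum_{k=1}^n\big\|(X_k^*)^r\big\|_{p/r}=\sum_{k=1}^n\|X_k^*\|_p^r,
\]
which together with the previous display and $(a+b)^r\le 2^r(a^r+b^r)$ yields
\[
(\Ex\|X\|_r^p)^{1/p}\le 2C\Big[\Big(\sum_{k=1}^n\big(\log\tfrac{en}{k}\big)^r\Big)^{1/r}+p\Big(\sum_{k=1}^n k^{-r/2}\Big)^{1/r}\Big].
\]
The first series I would bound by comparison with $\int_1^n(\log(en/x))^r\,dx$, which after the substitution $u=\log(en/x)$ is at most $en\,\Gamma(r+1)$; by Stirling $(en\,\Gamma(r+1))^{1/r}\le Crn^{1/r}$, and the $k=1$ term $(\log en)^r$ is itself $\le(rn^{1/r})^r$ (because $\log\rho\le\rho$ with $\rho=\tfrac{\log en}{r}$), so the first series contributes $\le Crn^{1/r}$. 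The second series is where the hypothesis $r>2$ is used: $\sum_{k\ge 1}k^{-r/2}\le 1+\int_1^\infty x^{-r/2}\,dx=\frac{r}{r-2}$, so it contributes $p(\frac{r}{r-2})^{1/r}$, and the divergence of this quantity as $r\downarrow 2$ is exactly what forces the constants to blow up near $r=2$. Hence $(\Ex\|X\|_r^p)^{1/p}\le C(rn^{1/r}+(\frac{r}{r-2})^{1/r}p)$ for all $p\ge r$, which gives the claimed moment bound in that range.

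For $2\le p\le r$ I would use monotonicity of $L_p$-norms, $(\Ex\|X\|_r^p)^{1/p}\le(\Ex\|X\|_r^r)^{1/r}\le C(rn^{1/r}+r(\frac{r}{r-2})^{1/r})$, and then absorb $r(\frac{r}{r-2})^{1/r}$ into $C(rn^{1/r}+(\frac{r}{r-2})^{1/r}\log n)$ via the elementary dichotomy ``$r\le e\log n$'' versus ``$r>e\log n$, in which case $(\frac{r}{r-2})^{1/r}$ is bounded by an absolute constant so that $r(\frac{r}{r-2})^{1/r}\le Crn^{1/r}$'', the handful of genuinely small $n$ being disposed of directly through $\|X\|_r\le\|X\|_2$ and Paouris' bound $(\Ex\|X\|_2^p)^{1/p}\le C(\sqrt n+p)$. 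This establishes the stated moment bound for all $p\ge 2$, and the tail estimate then follows by the standard Chebyshev argument: writing the moment bound as $(\Ex\|X\|_r^p)^{1/p}\le A+Bp$ with $A\asymp rn^{1/r}+(\frac{r}{r-2})^{1/r}\log n$ and $B\asymp(\frac{r}{r-2})^{1/r}$, one has $\Pr(\|X\|_r\ge e(A+Bp))\le e^{-p}$, and taking $p\asymp t/B$ gives $\Pr(\|X\|_r\ge t)\le\exp(-t/(CB))$ for $t\ge CA$.

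I do not expect a serious obstacle inside this argument: the real work is Theorem \ref{estorder} (equivalently Theorem \ref{estN}), which is assumed. Given that input, the only points that demand attention are the two summation estimates — checking that the first series genuinely produces the term $rn^{1/r}$ rather than something larger, and that the second produces precisely $(\frac{r}{r-2})^{1/r}$ — together with the routine constant bookkeeping in the range $2\le p\le r$ and for small $n$.
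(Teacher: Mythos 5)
Your argument is correct, but it routes the estimate differently from the paper. The paper also reduces everything to Theorem \ref{estorder}, but works at the level of tails rather than moments: it uses the dyadic bound $\|X\|_r^r\le\sum_{k=0}^{\lfloor\log_2 n\rfloor}2^k|X^*_{2^k}|^r$, subtracts the deterministic part $C^r\log^r(en2^{-k})$ (whose total over the blocks is $(Cr)^rn$ -- your first sum in dyadic form), and then performs a union bound over the roughly $\log_2 n$ blocks with levels $t_k$ chosen so that all exponents equal $t$; there the convergent geometric series $\sum_k 2^{k(2-r)/2}\le C\tfrac{r}{r-2}$ plays exactly the role of your $\sum_k k^{-r/2}$, and the $(\log_2 n+1)$ prefactor produced by the union bound is what forces the $(\tfrac{r}{r-2})^{1/r}\log n$ term in the threshold. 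You instead integrate the tail of Theorem \ref{estorder} to get $\|X_k^*\|_p\le C(\log\tfrac{en}{k}+p/\sqrt k)$, apply the triangle inequality in $L_{p/r}$ to $\|X\|_r^r=\sum_k(X_k^*)^r$, and finish with Chebyshev; this buys a slightly cleaner moment bound for $p\ge r$ (no $\log n$ at all in that range) at the price of the extra bookkeeping you describe for $2\le p\le r$. Two small points should be made explicit there: your dichotomy ``$r>e\log n$ implies $(\tfrac{r}{r-2})^{1/r}\le C$'' genuinely requires $n\ge 3$ (for $n=2$ every $r>2$ falls into that case while $(\tfrac{r}{r-2})^{1/r}$ can blow up), so the small-$n$ disposal via $\|X\|_r\le\|X\|_2$ and Paouris is really needed and should also be invoked when converting moments to tails -- for $n\ge 3$ one has $(\tfrac{r}{r-2})^{1/r}\log n\ge c\,(\tfrac{r}{r-2})^{1/r}$, so the threshold $t\ge C(rn^{1/r}+(\tfrac{r}{r-2})^{1/r}\log n)$ indeed allows the choice $p\asymp t(\tfrac{r-2}{r})^{1/r}\ge 2$, while for $n\le 2$ the tail bound is immediate from the one-dimensional (or Paouris) estimate since $(\tfrac{r-2}{r})^{1/r}\le 1$. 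With these details spelled out, your proof is complete and is a legitimate alternative to the paper's union-bound argument.
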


\begin{proof}

Let $s=\lfloor\log_2 n\rfloor$ we have
\[
\|X\|_r^r=\sum_{i=0}^s|X_i^*|^r\leq \sum_{k=0}^s 2^k|X_{2^k}^{*}|^r.
\]
Theorem \ref{estorder} yields
\begin{equation}
\label{estr1}
\Pr\Big(|X_k^*|^r\geq C_3^r\log^r\Big(\frac{en}{k}\Big)+t^r\Big)\leq \exp\Big(-\frac{1}{C}\sqrt{k}t\Big)
\quad \mbox{ for }t>0.
\end{equation}

Observe that
\[
\sum_{k=0}^s 2^k\log^r(en2^{-k})\leq Cn\sum_{j=1}^{\infty}j^r2^{-j}\leq (Cr)^rn.
\]
Thus for $t_1,\ldots,t_k\geq 0$ we get
\[
\Pr\Big(\|X\|_r\geq C\Big(rn^{1/r}+\Big(\sum_{k=0}^s t_k\Big)^{1/r}\Big)\Big)\leq 
\Pr\Big(\sum_{k=0}^sY_k\geq \sum_{k=0}^s t_k\Big)
\]
where
\[
Y_k:=2^k\big(|X_{2^k}^{*}|^r-C_3^r\log^r(en2^{-k})\big).
\]
Hence by \eqref{estr1}
\begin{align*}
\Pr\Big(\|X\|_r\geq C\Big(rn^{1/r}+\Big(\sum_{k=0}^s t_k\Big)^{1/r}\Big)\Big)&\leq 
\sum_{k=0}^s\Pr\Big(Y_k\geq t_k\Big)
\\
&\leq \sum_{k=0}^s\exp\Big(-\frac{1}{C}2^{\frac{k}{2}-\frac{k}{r}}t_k^{1/r}\Big).
\end{align*}

Fix $t>0$ and choose $t_k$ such that $t=2^{k/2-k/r}t_k^{1/r}$, then
\[
\sum_{k=0}^st_k=t^r\sum_{k=0}^s2^{\frac{k(2-r)}{2}}\leq t^r\big(1-2^{\frac{r-2}{2}}\big)^{-1}\leq 
Ct^{r}\frac{r}{r-2},
\]
so we get
\[
\Pr\Big(\|X\|_r\geq C\Big(rn^{1/r}+t\Big(\frac{r}{r-2}\Big)^{1/r}\Big)\Big)
\leq (\log_2 n+1)\exp\Big(-\frac{1}{C}t\Big).
\]
\end{proof}

\begin{proof}[Proof of Theorem \ref{thm_larger}]
Observe that $(\frac{r}{2-r})^{1/r}\leq C\delta^{-1/2}$ for $r\geq 2+\delta$ and
$\log n\leq rn^{1/r}$ and apply Proposition \ref{estlarger}.
\end{proof}

\section{Proof of Theorem \ref{estN}}

Our crucial tool will be the following result.

\begin{prop}
\label{conditional}
Let $X$ be an isotropic log-concave $n$-dimensional random vector, 
$A=\{X\in K\}$ where $K$ is a convex set in $\er^n$ such that $0<\Pr(A)\leq 1/e$. Then
\begin{equation}
\label{cond1}
\sum_{i=1}^n\Pr(A\cap\{X_i\geq t\})\leq C_1\Pr(A)\Big(t^{-2}\log^2(\Pr(A))+ne^{-t/C_1}\Big)\quad 
\mbox{ for } t\geq C_1.
\end{equation}
Moreover for $1 \leq u\leq \frac{t}{C_2}$,
\begin{equation}
\label{cond2}
\#\{i\leq n\colon \Pr(A\cap\{X_i\geq t\})\geq e^{-u}\Pr(A)\}\leq
\frac{C_2u^2}{t^2}\log^2(\Pr(A)). 
\end{equation}
\end{prop}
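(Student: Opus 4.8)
The plan is to pass to the conditional law. Write $p=\Pr(A)\le 1/e$ and let $\nu$ be the law of $X$ conditioned on $A=\{X\in K\}$; since $K$ is convex, $\nu$ is again log-concave (this is one of the two announced ideas), it has density $p^{-1}\ind_K$ with respect to the law of $X$, so $\nu\le p^{-1}\mu$, and every quantity in the statement equals $p$ times something computed under $\nu$, e.g.\ $\Pr(A\cap\{X_i\ge t\})=p\,\nu(\{y\colon y_i\ge t\})$. The other ingredient, Paouris' inequality \eqref{conc_mass}, enters through the coordinate projections: for $J\subseteq\{1,\dots,n\}$ the vector $P_JX$ is isotropic and log-concave in $\er^J$, so $\Pr(|P_JX|\ge s)\le e^{-s/C}$ for $s\ge C\sqrt{|J|}$.

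I would first record a priori control on $\nu$. For a unit vector $\xi$, $Z=\langle\xi,X\rangle$ is mean zero, variance one and log-concave, hence $\Pr(|Z|\ge s)\le e^{-s/C}$ for $s\ge C$; splitting $\Ex[Z^2\ind_A]=\int_0^\infty 2s\,\Pr(A\cap\{|Z|\ge s\})\,\mathrm{d}s$ at $s_0\asymp\log(1/p)$ and bounding the integrand by $\min\{p,e^{-s/C}\}$ gives $\Ex[Z^2\ind_A]\le Cp\log^2(1/p)$. Dividing by $p$: $\Ex_\nu\langle\xi,Y\rangle^2\le C\log^2(1/p)$ for every unit $\xi$, so the barycenter $b$ of $\nu$ satisfies $|b|\le C\log(1/p)$ and $\Cov(\nu)\preceq C\log^2(1/p)\,\mathrm{Id}$; the same truncation with $|Z|^q$ yields $(\Ex_\nu|\langle\xi,Y\rangle|^q)^{1/q}\le C(\log(1/p)+q)$ for $q\ge1$.

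I would then prove \eqref{cond2} and deduce \eqref{cond1} by a layer-cake argument: with $q_i=\Pr(X_i\ge t\mid A)$ one has $\sum_i q_i=\int_0^\infty\#\{i\colon q_i\ge e^{-u}\}\,e^{-u}\,\mathrm{d}u$, and splitting the $u$-integral at $t/C_2$, using \eqref{cond2} on $[1,t/C_2]$ (and its $u=1$ case on $[0,1]$, and $\#\{\cdots\}\le n$ on $(t/C_2,\infty)$), gives $\sum_i q_i\le C(t^{-2}\log^2 p+ne^{-t/C})$, which is $p^{-1}$ times \eqref{cond1}. For \eqref{cond2}, fix $1\le u\le t/C_2$ and put $I=\{i\colon\nu(Y_i\ge t)\ge e^{-u}\}$, $m=|I|$. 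A one-dimensional exponential deviation estimate (with universal constants) for the log-concave marginal $Y_i$ turns $\nu(Y_i\ge t)\ge e^{-u}$ into $t\le m_i+Cu\sigma_i$, with $m_i=\Ex_\nu Y_i$ and $\sigma_i^2=\Var_\nu Y_i$. Split $I$ by whether $\sigma_i\le t/(4Cu)$. If so, then $Cu\sigma_i\le t/4$, hence $m_i\ge 3t/4$; testing the barycenter bound against the unit vector $|I_{\mathrm{small}}|^{-1/2}\sum_{i\in I_{\mathrm{small}}}e_i$ gives $\tfrac{3t}{4}|I_{\mathrm{small}}|^{1/2}\le|b|\le C\log(1/p)$, i.e.\ $|I_{\mathrm{small}}|\le Cu^2t^{-2}\log^2 p$, as needed.

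The coordinates with $\sigma_i>t/(4Cu)$ --- large conditional variance --- are the crux, and here the trivial estimate $\sum_i\sigma_i^2\le\operatorname{tr}\,\Cov(\nu)$ is useless, since it carries a spurious factor $n$: one must show that a convex set of $\mu$-measure $p$ cannot make $\nu$ carry conditional variance $\gtrsim(t/u)^2$ in more than $\asymp u^2t^{-2}\log^2(1/p)$ coordinate directions. The natural approach is to fix a subset $J$ of such coordinates, observe that the large variances force $\Ex_\nu|P_JY|^2\gtrsim|J|\,(t/u)^2$ (again via the one-dimensional deviation estimate), and bound $\Ex_\nu|P_JY|^2$ from above using $\nu\le p^{-1}\mu$ and Paouris for the isotropic vector $P_JX$ --- passing first, by a Paley--Zygmund/averaging step, to an event on which a fixed proportion of the coordinates in $J$ are simultaneously $\ge t$, so that one may compare with $\Pr(|P_JX|\ge c\,t\sqrt{|J|})$. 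Making this produce the \emph{sharp} dependence $u^2$, rather than the $e^u$ that a crude union bound over subsets of $J$ would give, is the main obstacle; it is the one place where the convexity of $K$ is used beyond the operator-norm consequences exploited above, together with Paouris in an essential way.
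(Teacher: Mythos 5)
Your reduction of \eqref{cond1} to \eqref{cond2} by the layer-cake identity, your a priori bound $\Ex_\nu\langle\xi,Y\rangle^2\leq C\log^2(1/p)$, and your treatment of the coordinates with small conditional variance (via the barycenter test vector) are all correct, but they are the easy parts. The heart of the proposition is exactly the step you leave open: showing that at most $\asymp u^2t^{-2}\log^2(1/p)$ coordinates can have conditional variance (or second moment) of order $(t/u)^2$. You describe a ``natural approach'' and then state that making it yield $u^2$ rather than $e^u$ ``is the main obstacle''; so the proposal does not contain a proof of \eqref{cond2}, and hence not of \eqref{cond1} either.

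The paper closes precisely this step, and in a way that avoids the difficulties you anticipate. It works with second moments $\Ex Y_i^2$ directly (no mean/variance split is needed), and proves the key estimate \eqref{estm}: for $\alpha\geq C$, $\#\{i\colon \Ex Y_i^2\geq\alpha\}\leq C\alpha^{-1}\log^2\Pr(A)$. The argument is short: if $J$ is the set of such coordinates and $m=\#J$, then $\Ex_\nu\sum_{i\in J}Y_i^2\geq\alpha m$, and the Paley--Zygmund inequality applied to the \emph{single} statistic $S=\sum_{i\in J}Y_i^2$ gives $\nu(S\geq\tfrac12\alpha m)\geq 1/C$; the required bound $\Ex S^2\leq C(\Ex S)^2$ holds by Borell's reverse H\"older inequality because $Y$ is log-concave -- this is the only place the convexity of $K$ is used. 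Transferring by $\Pr(\cdot)\geq\Pr(A)\,\nu(\cdot)$ and applying Paouris \eqref{conc_mass} to the isotropic log-concave coordinate projection $(X_i)_{i\in J}$ yields $\exp(-\sqrt{\alpha m}/C)\geq\Pr(A)/C$, i.e.\ $m\leq C\alpha^{-1}\log^2\Pr(A)$. Taking $\alpha\asymp t^2/u^2$ (admissible since $u\leq t/C_2$) gives \eqref{cond2} with the sharp $u^2$; note that no union bound over subsets of $J$ and no event of simultaneous coordinate exceedances is ever needed -- one compares with $\{|P_JX|\geq\sqrt{\alpha m/2}\}$, not with $\{$many $X_i\geq t\}$ -- so the $e^u$ loss you feared simply does not arise. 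The paper then deduces \eqref{cond1} by a dyadic decomposition over the levels of $\Ex Y_i^2$ rather than your layer-cake integral, but that difference is cosmetic; the missing ingredient in your proposal is the Paley--Zygmund/Borell plus Paouris bound \eqref{estm}.
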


\begin{proof}
Let $Y$ be a random vector distributed as vector $X$ conditioned on $A$ that is
\[
\Pr(Y\in B)=\frac{\Pr(A\cap\{X\in B\})}{\Pr(A)}=\frac{\Pr(X\in B\cap K)}{\Pr(X\in K)}.
\]
Notice that in particular for any set $B$, $\Pr(X\in B)\geq \Pr(A)\Pr(Y\in B)$.

Vector $Y$ is log-concave, but no longer isotropic. Since this is only a matter of permutation of
coordinates we may assume that $\Ex Y_1^2\geq \Ex Y_2^2\geq \ldots\geq \Ex Y_n^2$.

For $\alpha>0$ let 
\[
m=m(\alpha)=\#\{i\colon \Ex Y_i^2\geq \alpha\}.
\]
We have $\Ex Y_1^2\geq\ldots\geq \Ex Y_m^2\geq \alpha$. Hence by the Paley-Zygmund inequality,
\[
\Pr\Big(\sum_{i=1}^m Y_i^2\geq \frac{1}{2}\alpha m\Big)\geq 
\Pr\Big(\sum_{i=1}^m Y_i^2\geq \frac{1}{2}\Ex\sum_{i=1}^m Y_i^2\Big)\geq 
\frac{1}{4}\frac{\Ex(\sum_{i=1}^m Y_i^2)^2}{(\Ex \sum_{i=1}^m Y_i^2)^2}\geq \frac{1}{C}.
\]
This implies that
\[
\Pr\Big(\sum_{i=1}^m X_i^2\geq \frac{1}{2}\alpha m\Big)\geq \frac{1}{C}\Pr(A). 
\]
However by the result of Paouris,
\[
\Pr\Big(\sum_{i=1}^m X_i^2\geq \frac{1}{2}\alpha m\Big)\leq 
\exp\Big(-\frac{1}{C_3}\sqrt{m\alpha}\Big)
\quad \mbox{ for }\alpha\geq C_3.
\]
So for $\alpha\geq C_3$, $\exp(-\frac{1}{C_3}\sqrt{m\alpha})\geq \Pr(A)/C$ and we get that
\begin{equation}
\label{estm}
m(\alpha)=\#\{i\colon \Ex Y_i^2\geq \alpha\}\leq \frac{C_4}{\alpha}\log^2(\Pr(A))\quad 
\mbox{ for }\alpha\geq C_3.
\end{equation}

We have
\[
\frac{\Pr(A\cap\{X_i\geq t\})}{\Pr(A)}=
\Pr(Y_i\geq t)\leq \exp\Big(1-\frac{t}{C(\Ex Y_i^2)^{1/2}}\Big)
\]
and \eqref{cond2} follows by \eqref{estm}.

Take $t\geq \sqrt{C_3}$ and let $k_0$ be a nonnegative integer such that 
$2^{-k_0}t\geq \sqrt{C_3}\geq 2^{-k_0-1}t$.
Define
\[
I_0=\{i\colon \Ex Y_i^2\geq t^2\},\quad I_{k_0+1}=\{i\colon \Ex Y_i^2<4^{-k_0}t^2\}
\]
and
\[
I_j=\{i\colon 4^{-j}t^2\leq \Ex Y_i^2<4^{1-j}t^2\}\quad j=1,2,\ldots,k_0.
\]
By \eqref{estm} we get
\[
\#I_j\leq C_44^jt^{-2}\log^2\Pr(A) \quad \mbox{for }j=0,1,\ldots,k_0
\]
and obviously $\#I_{k_0+1}\leq n$. Moreover for $i\in I_j$, $j\neq 0$,
\[
\Pr(Y_j\geq t)\leq \Pr\Big(\frac{Y_j}{(\Ex Y_j^2)^{1/2}}\geq 2^{j-1}\Big)\leq 
\exp\Big(1-\frac{1}{C}2^j\Big).
\]
Thus
\begin{align*}
\sum_{i=1}^{n}\Pr(Y_i\geq t)&=\sum_{j=0}^{k_0+1}\sum_{i\in I_j}\Pr(Y_i\geq t)
\leq \#I_0+e\sum_{j=1}^{k_0+1}\#I_j\exp\Big(-\frac{1}{C}2^j\Big)
\\
&\leq C_4\bigg(t^{-2}\log^2\Pr(A)\bigg(1+e\sum_{j=1}^{k_0}2^{2j}\exp\Big(-\frac{1}{C}2^j\Big)\bigg)+
ene^{-t/C}\bigg)\\
&\leq C_1\Big(t^{-2}\log^2\Pr(A)+ne^{-t/C_1}\Big).
\end{align*}
To finish the proof of \eqref{cond1} it is enough to observe that
\[
\sum_{i=1}^n\Pr(A\cap\{X_i\geq t\})=\Pr(A)\sum_{i=1}^n\Pr(Y_i\geq t).
\]
\end{proof}

The following two examples show that estimate \eqref{cond1} is close to be optimal.

\medskip

\noindent
{\bf Example 1.} Take $X_1,X_2,\ldots,X_n$ to be independent symmetric
exponential random variables with variance 1 and $A=\{X_1\geq \sqrt{2}\}$
Then $\Pr(A)=\frac{1}{2e}$ and
\[
\sum_{i=2}^n\Pr(A\cap\{X_i\geq t\})=\Pr(A)\sum_{i=2}^n\Pr(X_i\geq t)=
(n-1)\Pr(A)\exp(-t/\sqrt{2}),
\]
therefore the factor $ne^{-t/C}$ in \eqref{cond1} is necessary.

\medskip

\noindent
{\bf Example 2.} Take $A=\{X_1\geq t,\ldots,X_k\geq t\}$ then 
\[
\sum_{i=1}^n\Pr(A\cap\{X_i\geq t\})\geq k\Pr(A).
\]
So improvement of the factor $t^{-2}\Pr(A)\log^2\Pr(A)$ in \eqref{cond1} would imply in particular
a better estimate of $\Pr(X_1\geq t,\ldots,X_k\geq t)$ than $\exp(-\frac{1}{C}\sqrt{k}t)$,
which we do not know if there is possible to obtain.

\begin{proof}[Proof of Theorem \ref{estN}]
We have $N_X\leq n$, so statement is obvious if $t\sqrt{n}\leq Cp$,  in the sequel we will
assume that $t\sqrt{n}\geq 10p$.

 Let $C_1$ and $C_2$ be as in 
Proposition \ref{conditional} -- increasing $C_i$ if necessary we may assume
that $\Pr(X_1\geq t)\leq e^{-t/C_i}$ for $t\geq C_i$ and $i=1,2$.
Let us fix $p\geq 1$ and $t\geq C\log(\frac{nt^2}{p^2})$, then $t\geq \max\{C_1,4C_2\}$
and $t^2ne^{-t/C_1}\leq p^2$ if $C$ is large enough.
Let $l$ be a positive integer such that
\[
p\leq l\leq 2p \quad \mbox{ and } l=2^k \mbox{ for some  integer }k.
\]
Since $(\Ex (N_X(t))^p)^{1/p}\leq (\Ex (N_X(t))^l)^{1/l}$ it is enough to show 
that
\[
\Ex(t^2N_X(t))^l\leq (Cl)^{2l}.
\]
Recall that by our assumption on $p$, we have $t\sqrt{n}\geq 5l$.

To shorten the notation let
\[
B_{i_1,\ldots,i_s}=\{X_{i_1}\geq t,\ldots,X_{i_s}\geq t\}\quad \mbox{ and }\quad
B_{\emptyset}=\Omega.
\]
Define
\[
m(l):=\Ex N_X(t)^l=\Ex\Big(\sum_{i=1}^n\ind_{\{X_i\geq t\}}\Big)^l=
\sum_{i_1,\ldots,i_l=1}^{n}\Pr(B_{i_1,\ldots,i_l}),
\]
we need to show that
\begin{equation}
\label{toshow1}
m(l)\leq \Big(\frac{Cl}{t}\Big)^{2l}.
\end{equation}

We devide the sum in $m(l)$ into several parts.
Let 
$j_1\geq 2$ be such integer that 
\[
2^{j_1-2}< \log\Big(\frac{nt^2}{l^2}\Big)\leq 2^{j_1-1}.
\]
We set
\[
I_{0}=\big\{(i_1,\ldots,i_l)\in\{1,\ldots,n\}^l\colon \Pr(B_{i_1,\ldots,i_l})> e^{-l}\big\},
\] 
\[
I_{j}=\big\{(i_1,\ldots,i_l)\in\{1,\ldots,n\}^l\colon 
\Pr(B_{i_1,\ldots,i_l})\in (e^{-2^{j}l},e^{-2^{j-1}l}]
\big\}\quad   0<j<j_1
\]
and
\[
I_{j_1}=\big\{(i_1,\ldots,i_l)\in\{1,\ldots,n\}^l\colon 
\Pr(B_{i_1,\ldots,i_l})\leq e^{-2^{j_1-1}l}\big\}.
\]
Since $\{1,\ldots,n\}^l=\bigcup_{j=0}^{j_1}I_j$ we get $m(l)=\sum_{j=0}^{j_1}m_j(l)$, where
\[
m_j(l):=\sum_{(i_1,\ldots,i_l)\in I_j}\Pr(B_{i_1,\ldots,i_l})\quad
\mbox{ for } j_0\leq j\leq j_1.
\]

Bound for $m_{j_1}(l)$ is easy -- namely
since $\#I_{j_1}\leq n^l$ we have
\[
\sum_{(i_1,\ldots,i_l)\in I_{j_1}}\Pr(B_{i_1,\ldots,i_l})\leq 
n^le^{-2^{j_1-1}l}\leq \Big(\frac{l}{t}\Big)^{2l}.
\]

To estimate $m_{0}(l)$ define first for
$I\subset\{1,\ldots,n\}^l$ and $1\leq s\leq l$,
\[
P_sI=\{(i_1,\ldots,i_s)\colon (i_1,\ldots,i_l)\in I \mbox{ for some }
i_{s+1},\ldots,i_l\}.
\]
By Proposition \ref{conditional} we get for $s=1,\ldots,l-1$
\begin{align*}
\sum_{(i_1,\ldots,i_{s+1})\in P_{s+1}I_{0}}&\Pr(B_{i_1,\ldots,i_{s+1}})\leq
\sum_{(i_1,\ldots,i_{s})\in P_{s}I_{0}}\sum_{i_{s+1}=1}^n
\Pr(B_{i_1,\ldots,i_s}\cap\{X_{i_{s+1}}\geq t\})
\\
&\leq
C_1\sum_{(i_1,\ldots,i_{s})\in P_{s}I_{0}}
\Pr(B_{i_1,\ldots,i_{s}})(t^{-2}\log^2\Pr(B_{i_1,\ldots,i_{s}})+ne^{-t/C_1}).
\end{align*}
Observe that we have $\Pr(B_{i_1,\ldots,i_s})\geq e^{-l}$ for $(i_1,\ldots,i_s)\in P_sI_0$ and
recall that $t^2ne^{-t/C_1}\leq p^2\leq 4l^2$, hence
\[
\sum_{(i_1,\ldots,i_{s+1})\in P_{s+1}I_{0}}\Pr(B_{i_1,\ldots,i_{s+1}})
\leq 5C_1t^{-2}l^2\sum_{(i_1,\ldots,i_{s})\in P_{s}I_{j_0}}
\Pr(B_{i_1,\ldots,i_{s}}).
\]
So, by easy induction we obtain
\begin{align*}
m_{0}(l)&=\sum_{(i_1,\ldots,i_l)\in I_{0}}\Pr(B_{i_1,\ldots,i_l})\leq 
(5C_1t^{-2}l^2)^{l-1}
\sum_{i_1\in P_1I_0}\Pr(B_{i_1})
\\
&\leq (5C_1t^{-2}l^2)^{l-1}ne^{-t/C_1}\leq \Big(\frac{Cl}{t}\Big)^{2l}.
\end{align*}

Now it comes the most involved part -- estimating $m_j(l)$ for $0<j<j_1$, that is based on
suitable bounds for $\#I_j$. We will need the following
simple combinatorial lemma.

\begin{lem}
\label{combf}
Let $l_0\geq l_1\geq\ldots\geq l_s$ be a fixed sequence of positive integers and
\[
{\cal F}=\Big\{f\colon\{1,2,\ldots,l_0\}\rightarrow\{0,1,2,\ldots,s\}\colon\ 
\forall_{1\leq i\leq s}\ \#\{r\colon f(r)\geq i\}\leq l_i\Big\}.
\]
Then 
\[
\#{\cal F}\leq\prod_{i=1}^s\Big(\frac{el_{i-1}}{l_i}\Big)^{l_i}.
\]
\end{lem}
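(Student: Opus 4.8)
The plan is to encode each $f\in{\cal F}$ by its sequence of super-level sets and then count such sequences by a telescoping product of binomial estimates. For $f\in{\cal F}$ set $A_i=\{r\in\{1,\ldots,l_0\}\colon f(r)\geq i\}$ for $i=1,\ldots,s$. Since $f$ takes values in $\{0,1,\ldots,s\}$ we have $A_1\supseteq A_2\supseteq\cdots\supseteq A_s$, and the defining condition of ${\cal F}$ says precisely that $\#A_i\leq l_i$ for every $i$. Conversely, from any chain $A_1\supseteq\cdots\supseteq A_s$ of subsets of $\{1,\ldots,l_0\}$ we recover $f$ by $f(r)=\max\{i\colon r\in A_i\}$ (with $f(r)=0$ if $r\notin A_1$), and this $f$ lies in ${\cal F}$ whenever $\#A_i\leq l_i$ for all $i$. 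Thus $f\mapsto(A_1,\ldots,A_s)$ is a bijection between ${\cal F}$ and the family of such nested chains, and it suffices to bound the number of chains.

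Next I would record the elementary estimate that for integers $1\leq m\leq N$,
\[
\sum_{j=0}^m\binom{N}{j}\leq\Big(\frac{eN}{m}\Big)^m .
\]
This follows since $(m/N)^j\geq(m/N)^m$ for $0\leq j\leq m$, hence $\sum_{j=0}^m\binom{N}{j}\leq(N/m)^m\sum_{j=0}^N\binom{N}{j}(m/N)^j=(N/m)^m(1+m/N)^N\leq(N/m)^m e^m$.

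Now I would count the chains greedily from the top level down. The number of choices for $A_1\subseteq\{1,\ldots,l_0\}$ with $\#A_1\leq l_1$ is $\sum_{j=0}^{l_1}\binom{l_0}{j}\leq(el_0/l_1)^{l_1}$. Having fixed $A_{i-1}$ (with $\#A_{i-1}\leq l_{i-1}$), the number of choices for $A_i\subseteq A_{i-1}$ with $\#A_i\leq l_i$ is $\sum_{j=0}^{l_i}\binom{\#A_{i-1}}{j}\leq(e\,\#A_{i-1}/l_i)^{l_i}\leq(el_{i-1}/l_i)^{l_i}$, where the last inequality uses $\#A_{i-1}\leq l_{i-1}$ together with $l_i\leq l_{i-1}$ so that the function $x\mapsto(ex/l_i)^{l_i}$ is increasing for the relevant range. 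Multiplying these bounds over $i=1,\ldots,s$ gives $\#{\cal F}\leq\prod_{i=1}^s(el_{i-1}/l_i)^{l_i}$, as claimed.

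There is no real obstacle here: the only point requiring a moment's care is the bound on the partial binomial sum and the monotonicity used to replace $\#A_{i-1}$ by $l_{i-1}$ in each step; once the super-level-set encoding is in place the rest is a routine product of these estimates.
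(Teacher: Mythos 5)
Your argument is essentially the paper's own proof: encode $f$ by its super-level sets $A_i=\{r\colon f(r)\geq i\}$, note these form a nested chain with $\#A_i\leq l_i$, and bound the number of admissible chains by a product of partial binomial sums, each estimated by $\big(\frac{el_{i-1}}{l_i}\big)^{l_i}$. One small ordering slip: the intermediate inequality $\sum_{j=0}^{l_i}\binom{\#A_{i-1}}{j}\leq\big(\frac{e\,\#A_{i-1}}{l_i}\big)^{l_i}$ can fail when $\#A_{i-1}<l_i$ (the left side is then $2^{\#A_{i-1}}$ while the right side may be smaller, e.g.\ $\#A_{i-1}=1$, $l_i=2$), so you should, as the paper does, first replace $\binom{\#A_{i-1}}{j}$ by $\binom{l_{i-1}}{j}$ and only then apply the partial-sum estimate with $N=l_{i-1}\geq l_i$; the final bound is unaffected.
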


\begin{proof}[Proof of Lemma \ref{combf}]
Notice that any function $f\colon\{1,2,\ldots,l_0\}\rightarrow\{0,1,2,\ldots,s\}$ is determined by
the sets $A_i=\{r\colon f(r)\geq i\}$ for $i=0,1,\ldots,s$. Take $f\in {\cal F}$, obviously 
$A_0=\{1,\ldots,l_0\}$. If the set $A_{i-1}$ of cardinality $a_{i-1}\leq l_{i-1}$ is already chosen then the set $A_i\subset A_{i-1}$ of cardinality at most $l_i$ may be
chosen in
\[
\binom{a_{i-1}}{0}+\binom{a_{i-1}}{1}+\ldots+\binom{a_{i-1}}{l_i}\leq
\binom{l_{i-1}}{0}+\binom{l_{i-1}}{1}+\ldots+\binom{l_{i-1}}{l_i}\leq 
\Big(\frac{el_{i-1}}{l_i}\Big)^{l_i}
\]
ways.
\end{proof}

We come back to the proof of Theorem \ref{estN}. Fix $0<j<j_1$,
let  $r_1$ be a positive integer such that
\[
2^{r_1}< \frac{t}{C_2}\leq 2^{r_1+1}.
\] 
For $(i_1,\ldots,i_l)\in I_j$
we define a function $f_{i_1,\ldots,i_l}\colon \{1,\ldots,l\}\rightarrow \{j,j+1,\ldots,r_1\}$
by the formula
\[
f_{i_1,\ldots,i_l}(s)=
\left\{
\begin{array}{ll}
j &\mbox{ if }\Pr(B_{i_1,\ldots,i_s})\geq \exp(-2^{j+1})\Pr(B_{i_1,\ldots,i_{s-1}}),
\\
r &\mbox{ if }
\exp(-2^{r+1})\leq \frac{\Pr(B_{i_1,\ldots,i_s})}{\Pr(B_{i_1,\ldots,i_{s-1}})}<\exp(-2^{r}),\
j<r<r_1,
\\
r_1 &\mbox{ if }\Pr(B_{i_1,\ldots,i_s})< \exp(-2^{r_1})\Pr(B_{i_1,\ldots,i_{s-1}}).
\end{array}
\right.
\]
Notice that for all $i_1$, $\Pr(X_{i_1}\geq t)\leq e^{-t/C_2}< \exp(-2^{r_1})\Pr(B_{\emptyset})$,
so $f_{i_1,\ldots,i_l}(1)=r_1$ for all $i_1,\ldots,i_l$.

Put
\[
{\cal F}_j:=\big\{f_{i_1,\ldots,i_l}\colon\ (i_1,\ldots,i_l)\in I_j\big\}. 
\]
For $f=f_{i_1,\ldots,i_l}\in {\cal F}_j$ and $r>j$, we have
\[
\exp(-2^{j}l)< \Pr(B_{i_1,\ldots,i_l})< \exp(-2^r\#\{s\colon f(s)\geq r\}),
\]
so
\begin{equation}
\label{est_l_r}
\#\{s\colon f(s)\geq r\}\leq 2^{j-r}l=:l_r.
\end{equation}
Observe that the above inequality holds also for $r=j$.
We have $l_{r-1}/l_r=2$ and $\sum_{r=j+1}^{r_1}l_r\leq l$ so by Lemma \ref{combf}
we get 
\[
\#{\cal F}_j
\leq \prod_{r=j+1}^{r_1}\Big(\frac{el_{r-1}}{l_r}\Big)^{l_r}\leq e^{2l}.
\]

Now fix $f\in {\cal F}_j$ we will estimate the cardinality of the set
\[
I_j(f):=\{(i_1,\ldots,i_l)\in I_j\colon\ f_{i_1,\ldots,i_j}=f\}.
\]
Put
\[
n_r:=\#\{s\in\{1,\ldots,l\}\colon f(s)=r\}\quad r=j,j+1,\ldots,r_1.
\]
We have 
\[
n_j+n_{j+1}+\ldots+n_{r_1}=l,
\]
moreover if $i_1,\ldots,i_{s-1}$ are fixed and 
$f(s)=r<r_1$ then $s\geq 2$ and by the second part of Proposition \ref{conditional}
(with $u=2^{r+1}\leq t/C_2$) $i_s$ may take at most
\[
\frac{4C_22^{2r}}{t^2}\log^2\Pr(B_{i_1,\ldots,i_{s-1}})\leq
\frac{4C_{2}2^{2(r+j)}l^2}{t^2}\leq
\frac{4C_2l^2}{t^2}\exp(2(r+j))=:m_r
\]
values. Thus
\[
\#I_j(f)\leq n^{n_{r_1}}\prod_{r=j}^{r_1-1}m_r^{n_r}=
n^{n_{r_1}}\Big(\frac{4C_2l^2}{t^2}\Big)^{l-n_{r_1}}\exp\Big(\sum_{r=j}^{r_1-1}2(r+j)n_r\Big).
\]
Observe that by previously derived estimate \eqref{est_l_r} we get
\[
n_r\leq l_r= 2^{j-r}l,
\]
hence
\[
\sum_{r=j}^{r_1-1}2(r+j)n_r\leq 2^{j+2}l\sum_{r=j}^{\infty}r2^{-r}\leq (C+2^{j-2})l.
\]
We also have
\[
n_{r_1}\leq 2^{j-r_1}l\leq\frac{2C_2}{t}2^jl\leq 
\frac{1}{\log(nt^2/(4l^2))}2^{j-3}l,
\]
where the last inequality holds since $t\geq C\log(nt^2/(4l^2))$ and $C$ may be taken arbitrarily large.
So we get that for any $f\in {\cal F}_j$,
\[
\#I_j(f)\leq \Big(\frac{Cl^2}{t^2}\Big)^l\Big(\frac{nt^2}{4l^2}\Big)^{n_{r_1}}\exp\big(2^{j-2}l\big)\leq 
\Big(\frac{Cl^2}{t^2}\Big)^l\exp\Big(\frac{3}{8}2^{j}l\Big).
\]

This shows that
\[
\#I_j\leq \#{\cal F}_j\cdot \Big(\frac{Cl^2}{t^2}\Big)^l\exp\Big(\frac{3}{8}2^{j}l\Big)\leq 
\Big(\frac{Cl^2}{t^2}\Big)^l\exp\Big(\Big(2+\frac{3}{8}2^{j}\Big)l\Big).
\]
Hence
\[
m_j(l)=\sum_{(i_1,\ldots,i_l)\in I_j}\Pr(B_{i_1,\ldots,i_l})\leq
\#I_j\exp(-2^{j-1}l)\leq \Big(\frac{Cl^2}{t^2}\Big)^l\exp\big(-2^{j-3}l\big).
\]
Therefore
\begin{align*}
m(l)&=m_0(l)+m_{j_1}(l)+\sum_{j=1}^{j_1-1}m_j(l)
\leq \Big(\frac{l}{t}\Big)^{2l}\Big(C^l+1+\sum_{j=1}^{\infty}C^l\exp\big(-2^{j-3}l\big)\Big)
\\
&\leq \Big(\frac{Cl}{t}\Big)^{2l}
\end{align*}
and \eqref{toshow1} holds.
\end{proof}

{\bf Acknowledgments.} This work was done while the author was taking part in the Thematic 
Program on Asymptotic Geometric Analysis at the  Fields Institute in Toronto. The author would
like to thank Rados{\l}aw Adamczak and Nicole Tomczak-Jaegermann for their helpful comments.

\end{document}